\algnewcommand\To{\textbf{to}}
\algnewcommand\Break{\textbf{break}}
\newtheorem{theorem}{Theorem}
\newtheorem{definition}[theorem]{Definition}
\newtheorem{lemma}[theorem]{Lemma}
\newtheorem{assumption}{Assumption}
\renewcommand{\c}[1]{{\mathcal{#1}}}
\newcommand{\mat}[1]{\boldsymbol{#1}}
\newcommand{\randmat}[1]{\mat{{\mathcal{{#1}}}}}
\DeclareMathOperator{\diag}{diag}
\DeclareMathOperator{\blockdiag}{blockdiag}
\newcommand{\MG}{M\!G}
\newcommand{\RR}{\mathbb{R}}
\newcommand{\PP}{\mathbb{P}}
\renewcommand{\SS}{\mathbb{S}}
\newcommand{\NN}{\mathbb{N}}
\DeclareMathOperator{\grad}{grad}
\DeclareMathOperator{\curl}{curl}
\DeclareMathOperator{\Div}{div}
\renewcommand{\Pr}[1]{\mathbb{P}\left(#1\right)}
\newcommand{\Exp}[1]{\mathbb{E}\left[#1\right]}
\newcommand{\Var}[1]{\mathbb{V}\left[#1\right]}
\DeclareMathOperator{\Cov}{\mathrm{Cov}}
\providecommand{\norm}[1]{\left\lVert#1\right\rVert}
\providecommand{\opnorm}[1]{\left\lVert#1\right\rVert_{A^{2}}}
\providecommand{\abs}[1]{\left|#1\right|}
\newcommand\numberthis{\addtocounter{equation}{1}\tag{\theequation}}
\Crefname{ALC@unique}{Line}{Lines}
\crefname{assumption}{Assumption}{Assumptions}
\Crefname{subsection}{Section}{Sections}
\title[Is the multigrid method fault tolerant? The Multilevel Case.]{Is the multigrid method fault tolerant? \\ The Multilevel Case.}
\thanks{The effort of MA was partially supported by SIRIUS award from DoE.}
\author{Mark Ainsworth}
\address{Division of Applied Mathematics, Brown University, 182 George St, Providence, RI 02912, USA.\newline
  Computer Science and Mathematics Division, Oak Ridge National Laboratory, Oak Ridge, TN 37831, USA.}
\email{Mark\_Ainsworth@Brown.edu}
\author{Christian Glusa}
\address{Division of Applied Mathematics, Brown University, 182 George St, Providence, RI 02912, USA.}
\email{Christian\_Glusa@Brown.edu}
\begin{document}

\begin{abstract}
  Computing at the exascale level is expected to be affected by a significantly higher rate of faults, due to increased component counts as well as power considerations.
  Therefore, current day numerical algorithms need to be reexamined as to determine if they are fault resilient, and which critical operations need to be safeguarded in order to obtain performance that is close to the ideal fault-free method.

    In a previous paper \cite{AinsworthGlusa216_IsMultigridMethodFaultTolerant}, a framework for the analysis of random stationary linear iterations was presented and applied to the two grid method.
  The present work is concerned with the multigrid algorithm for the solution of linear systems of equations, which is widely used on high performance computing systems.
  It is shown that the Fault-Prone Multigrid Method is not resilient, unless the prolongation operation is protected.
  Strategies for fault detection and mitigation as well as protection of the prolongation operation are presented and tested, and a guideline for an optimal choice of parameters is devised.
\end{abstract}

\keywords{Multigrid, Fault Tolerance, Resilience, Random Matrices, Convergence Analysis}
\subjclass{65F10, 65N22, 65N55, 68M15}

\maketitle

\section{Introduction}
\label{sec:introduction}

The anticipated arrival of exascale systems opens up the possibility for scientific simulation at dramatically increased scale, provided that several technical challenges can be overcome.
One such challenge concerns the resilience of the underlying numerical algorithm in the face of the increased level of faults and node failures on an exascale machine \cite{CappelloGeistEtAl2009_TowardExascaleResilience,CappelloGeistEtAl2014_TowardExascaleResilience}.

Faults are classified as \emph{hard} or \emph{soft} \cite{AvizienisLaprieEtAl2004_BasicConceptsTaxonomyDependableSecureComputing}.
While hard faults require immediate remedial action in order for the program execution to proceed, the effect of soft faults is to perturb data and instructions, possibly undetected but having the potential to degrade the performance of the algorithm or even invalidate the entire simulation.
Node failures or lost messages constitute examples of hard faults, whereas random bit flips, induced by cosmic particles, can be classified as soft faults.
It is of urgent concern to determine the effect of faults on existing state of the art numerical methods which will be utilised on exascale systems.
In the cases where the algorithms are found wanting, one would like to know how to modify the schemes to cope with the new reality of fault-prone computing.

The widely used \emph{multigrid algorithm} for the solution of linear systems of equations is the concern of this work.
In earlier work \cite{AinsworthGlusa216_IsMultigridMethodFaultTolerant}, we introduced a simple fault mitigation strategy called \emph{laissez-faire}, whereby, instead of trying to recover any lost or corrupted information, one simply replaces affected values with zero before continuing the execution.
This approach has several advantages in terms of efficiency including no requirement for communication.
However, the acid test is whether or not this strategy is effective and results in any improvement in fault resilience.

The goal of this work is twofold.
Firstly, we extend the analytic convergence estimates from the two grid setting to an arbitrary number of levels.
Secondly, we investigate how the algorithm can be applied in practice where some form of fault detection is needed.
In \Cref{sec:review}, we will recall the basic theoretical framework for fault-prone linear iterative methods as well as some previous results needed in the analysis.
We then state a general convergence estimate for Fault-Prone Multigrid in \Cref{sec:main-results}, and demonstrate its validity for a range of numerical examples.
Finally, the practical issue of fault detection and protection of the involved operations is discussed in \Cref{sec:implementation}.
The effects of different levels of protection are demonstrated for a model problem, and optimal parameters depending on fault rate and problem size are given.
The theoretical details and proofs of the results are collected in the Appendix.

\section{Preliminaries}
\label{sec:review}

\subsection{Modelling Faults}

In \cite{AinsworthGlusa216_IsMultigridMethodFaultTolerant}, we proposed to model the effect of faults and their mitigation in iterative linear solvers through random diagonal matrices.
A fault-free vector $x\in\RR^n$ gets transformed into \(\tilde{x}\) according to
\begin{align*}
  \tilde{x} = \randmat{X}x.
\end{align*}
Throughout an algorithm, several operations might be subject to faults, each corresponding to such a fault matrix.
For given \(\varepsilon>0\), let $\SS_{\varepsilon}$ denote the set of these fault matrices, and assume that they satisfy the following conditions:
\begin{assumption} \label{as:faults}\leavevmode
  \begin{enumerate}

  \item Each $\randmat{X}\in\SS_{\varepsilon}$ is a random diagonal matrix.

  \item For every $\randmat{X}\in\SS_{\varepsilon}$, there holds $\Exp{\randmat{X}} =
    e(\randmat{X})\mat{I}$, where $e(\randmat{X})>0$, and
    $|e(\randmat{X})-1|\le C\varepsilon$ for some fixed $C>0$.

  \item For every
    $\randmat{X}\in \SS_{\varepsilon}$ there holds $\norm{\Var{\randmat{X}]}}_{2}
    = \max_{i,j}\left|\Cov\left(\randmat{X}_{ii},\randmat{X}_{jj}\right)
    \right| \leq\varepsilon$.

  \end{enumerate}
\end{assumption}

\(\varepsilon\) measures how close the fault matrices are to the identity, i.e. the fault-free case.
While the value of \(\varepsilon\) could vary from operation to operation, for example to take into account that denser matrix-vector products take more time and are therefore more likely to be hit by faults, we neglect this aspect in the analysis for the sake of clarity.

Important examples of random faults covered by \Cref{as:faults} are
\begin{enumerate}
\item \emph{Componentwise detectable faults}

  A typical example of a detectable fault would be flipping of individual bits in a floating point number, resulting in a large enough upset to be detected, or a pointer corruption that leads to an invalid memory address.
  Such cases can be treated using the laissez-faire strategy described above, and therefore modelled as componentwise faults with
  \begin{align}
    \randmat{X}= \diag\left(\chi_{1},\dots,\chi_{n}\right),\label{eq:componentwiseFaults}
  \end{align}
  where $\chi_i$ are independent and identically \(\varepsilon\)-distributed Bernoulli random variables, i. e.
  \begin{align*}
    \chi_{i} &=
           \begin{cases}
             0          & \text{with probability } \varepsilon, \\
             1          & \text{with probability } 1-\varepsilon.
           \end{cases}
  \end{align*}

\item \emph{Blockwise detectable faults}

  In the event of a node failure and application of the laissez-faire strategy, all the components of a vector that were residing on the node will be zeroed out.
  This can be modelled by a random matrix with independent diagonal \emph{blocks}:
  \begin{align}
    \randmat{X}= \blockdiag\left(\chi_{1}\mat{I}_{1},\dots,\chi_{N}\mat{I}_{N}\right), \label{eq:blockwiseFaults}
  \end{align}
  where $\chi_i$ are independent and identically \(\varepsilon\)-distributed Bernoulli random variables and \(\mat{I}_{j}\) are identity matrices.

\item \emph{Silent faults}

  Soft faults may be difficult or even impossible to detect, especially if their induced perturbation is small. Such silent faults can be modelled by a random matrix
  \begin{align}
    \randmat{X} = \mat{I}+\diag\left(\eta_1\chi_1, \dots, \eta_n\chi_n\right) \label{eq:silentFaults}
  \end{align}
  where $\eta_i$ are independent and identically distributed random variables, and \(\chi_{i}\) are independent and identically distributed Bernoulli random variables, such that \(\randmat{X}\in \SS_{\varepsilon}\).
  In particular, this in includes the cases of frequent faults with small impact and of rare but large upsets.
\end{enumerate}

\subsection{Multigrid Algorithm}

Let $\mat{A}$ be a symmetric, positive definite matrix arising from a finite element discretization of an elliptic partial differential equation in $d$ spatial dimensions.
The multigrid method solves the linear system of equations
\begin{align*}
  \mat{A}x = b
\end{align*}
by introducing a hierarchy of coarser problems
\begin{align*}
  \mat{A}_{\ell}x_{\ell}=b_{\ell}
\end{align*}
of size \(n_{\ell}\), \(\ell=0,\dots,L\) with \(\mat{A}_{L}=\mat{A}\).
Information gets transferred between levels through restriction and prolongation operators
\begin{align*}
  \mat{R}_\ell^{\ell+1}: \RR^{n_{\ell+1}}\rightarrow \RR^{n_\ell}, \quad
  \mat{P}_{\ell+1}^\ell: \RR^{n_\ell}\rightarrow \RR^{n_{\ell+1}}.
\end{align*}
We will assume that \(\mat{P}_{\ell+1}^\ell=\left(\mat{R}_\ell^{\ell+1}\right)^{T}\) along with the usual Galerkin relation \(\mat{A}_{\ell} = \mat{R}_{\ell}^{\ell+1}\mat{A}_{\ell+1}\mat{P}_{\ell+1}^{\ell}\).
We will drop sub- and superscripts on restriction and prolongation operators in what follows.
Moreover, smootheners are defined on each level as
\begin{align*}
  x_{\ell}\leftarrow x_{\ell}+\mat{N}_{\ell}\left(b_{\ell}-\mat{A}_{\ell}x_{\ell}\right),
\end{align*}
where \(\mat{N}_{\ell}\) is a suitable preconditioner, e.g. the damped Jacobi preconditioner \(\mat{N}_{\ell}=\theta \mat{D}_{\ell}^{-1}\), with \(\mat{D}_{\ell}\) the diagonal part of \(\mat{A}_{\ell}\).

The Fault-Prone Multigrid Method \(\c{\MG}_{\ell}\) was described in detail in \cite{AinsworthGlusa216_IsMultigridMethodFaultTolerant}, and is given by \Cref{alg:ndmg}.
The classical fault-free variant \(\MG_{\ell}\) can be obtained by replacing all fault matrices \(\randmat{X}^{(\bullet)}_{\ell}\) with identity matrices.

\begin{algorithm}
  \begin{algorithmic}[1]
    \Require Right hand side $b_\ell$; Initial iterate $x_\ell$
    \Ensure $\c{\MG}_\ell(b_\ell, x_\ell)$
    \If{$\ell=0$}
    \Return $\mat{A}_{0}^{-1}b_{0}$  \Comment{Exact solve on coarsest grid}
    \Else
    \For{$i\leftarrow 1$ \To{} $\nu_{1}$}
    \State $x_\ell\leftarrow x_{\ell}+\randmat{X}^{(S,\text{pre})}_{\ell}\mat{N}_{\ell}\left(b_\ell-\mat{A}_{\ell}x_\ell\right)$  \Comment{$\nu_1$ pre-smoothing steps}
    \EndFor
    \State $d_{\ell-1}\leftarrow\randmat{X}^{(R)}_{\ell-1}\mat{R}\randmat{X}^{(\rho)}_\ell\left(b_\ell-\mat{A}_\ell x_\ell\right)$  \Comment{Restriction to coarser grid}
    \State $e_{\ell-1}\leftarrow 0$
    \For{$j\leftarrow 1$ \To{} $\gamma$}
    \State $e_{\ell-1}\leftarrow \c{\MG}_{\ell-1}(d_{\ell-1},e_{\ell-1})$  \Comment{$\gamma$ coarse grid correction steps}
    \EndFor
    \State $x_\ell\leftarrow x_\ell + \randmat{X}^{(P)}_\ell\mat{P}e_{\ell-1}$  \Comment{Prolongation to finer grid}
    \For{$i\leftarrow 1$ \To{} $\nu_{2}$}
    \State $x_\ell\leftarrow x_{\ell}+\randmat{X}^{(S,\text{post})}_{\ell}\mat{N}_{\ell}\left(b_\ell-\mat{A}_{\ell}x_\ell\right)$  \Comment{$\nu_2$ post-smoothing steps}
    \EndFor
    \EndIf
  \end{algorithmic}
  \caption{Model for Fault-Prone Multigrid Algorithm $\c{\MG}_\ell$ where $\randmat{X}^{(\bullet)}_\ell$ are random diagonal matrices.}
  \label{alg:ndmg}
\end{algorithm}

The classical approach to the analysis of iterative solution methods for linear systems uses the notion of an iteration matrix.
For the fault-prone method \(\c{\MG}_{\ell}\), it is defined by the equation
\begin{align*}
  x - \c{\MG}_\ell(\mat{A}_\ell x, y) = \randmat{E}_\ell(x-y),
  \quad\forall x,y\in\RR^{n_\ell},
\end{align*}
and given by
\begin{align}\label{eq:Mrecursion}
  \randmat{E}_{0}&=\mat{0}, \nonumber \\
  \randmat{E}_\ell
  &= \left(\randmat{E}^{S,\text{post}}_\ell\right)^{\nu_2}
  \left[ \mat{I}
  - \randmat{X}^{(P)}_\ell\mat{P}( \mat{I} -\randmat{E}^\gamma_{\ell-1} )
  \mat{A}_{\ell-1}^{-1}\randmat{X}^{(R)}_{\ell-1} \mat{R}\randmat{X}^{(\rho)}_\ell \mat{A}_\ell
  \right]
  \left(\randmat{E}^{S,\text{pre}}_\ell\right)^{\nu_1},
\end{align}
for $\ell=1,\ldots,L$, with \(\randmat{E}_{\ell}^{S,\text{pre/post}}=\mat{I}-\randmat{X}_{\ell}^{(S,\text{pre/post})}\mat{N}_{\ell}\mat{A}_{\ell}\).
Here, we have used superscripts pre and post to reflect that the pre- and post-smootheners are \emph{independent realisations of the same random matrix}.
Moreover, we will use powers of random matrices to signify products of independent realisations of the same random matrix.

Setting $\randmat{E}_{L-1}=\mat{0}$ and applying the recursion~\cref{eq:Mrecursion} in the case $\ell=L$ yields a formula for the iteration matrix of the \emph{Fault-Prone Two Grid Algorithm}:
\begin{align}\label{eq:twolevel}
    \randmat{E}^{TG}_L
    &= \left(\randmat{E}^{S,\text{post}}_L\right)^{\nu_2}
      \left[ \mat{I} - \randmat{X}^{(P)}_L\mat{P}
	  \mat{A}_{L-1}^{-1}\randmat{X}^{(R)}_{L-1} \mat{R}\randmat{X}^{(\rho)}_{L} \mat{A}_L
    \right]
    \left(\randmat{E}^{S,\text{pre}}_L\right)^{\nu_1} \\
  &=\left(\randmat{E}^{S,\text{post}}_L\right)^{\nu_2}
  \randmat{E}_{L}^{CG}
  \left(\randmat{E}^{S,\text{pre}}_L\right)^{\nu_1},\nonumber
\end{align}
corresponding to using an exact solver on level $L-1$.
Here \(\randmat{E}^{CG}_{\ell}\) is the iteration matrix of the exact fault-prone coarse grid correction on level \(\ell\).

By replacing all fault matrices with the identity, the classical fault-free quantities are recovered:
\begin{align*}
  \mat{E}_{\ell} &= \left(\mat{E}_{\ell}^{S}\right)^{\nu_{2}} \left[\mat{I}-\mat{P}\left(\mat{I}-\mat{E}_{\ell-1}^{\gamma}\right)\mat{A}_{\ell}^{-1}\mat{R}\mat{A}_{\ell}\right] \left(\mat{E}_{\ell}^{S}\right)^{\nu_{1}}, \\
  \mat{E}_{\ell}^{S}&=\mat{I}-\mat{N}_{\ell}\mat{A}_{\ell}, \\
  \mat{E}_{\ell}^{TG} &= \left(\mat{E}_{\ell}^{S}\right)^{\nu_{2}} \mat{E}^{CG}_{\ell} \left(\mat{E}_{\ell}^{S}\right)^{\nu_{1}}, \\
  \mat{E}^{CG}_{\ell}&=\mat{I}-\mat{P}\mat{A}_{\ell}^{-1}\mat{R}\mat{A}_{\ell}.
\end{align*}
These are the iteration matrices of the fault-free multigrid methods, smoothener, two grid method and coarse-grid correction.

\subsection{Convergence Theory for Standard Multigrid}
\label{sec:conv-theory-fault-free}

The convergence proof of the Fault-Prone Multigrid Method is motivated by the classical analysis of the fault-free algorithm.
The standard assumptions for the convergence analysis of the fault-free multigrid method read as follows~\cite{Braess2007_FiniteElements,Hackbusch1985_MultiGridMethodsApplications,Hackbusch1994_IterativeSolutionLargeSparseSystemsEquations,TrottenbergOosterleeEtAl2001_Multigrid}:

\begin{assumption}[Smoothing property]\label{as:smoothing}
  There exists \(\eta: \NN\rightarrow \RR_{\geq0}\) satisfying \(\lim_{\nu\rightarrow\infty}\eta(\nu)  = 0\) and such that
  \begin{align*}
    \norm{\mat{A}_{\ell}\left(\mat{E}^{S}_{\ell}\right)^{\nu}}_{2} & \leq \eta(\nu)\norm{\mat{A}_{\ell}}_{2}, \quad \nu\geq0,~\ell=1,\dots,L.
  \end{align*}
\end{assumption}

\begin{assumption}[Approximation property]\label{as:approximation}
  There exists a constant \(C_{A}\) such that
  \begin{align*}
    \norm{\mat{E}^{CG}_{\ell}\mat{A}_{\ell}^{-1}}_{2} & \leq \frac{C_{A}}{\norm{\mat{A}_{\ell}}_{2}}, \quad \ell=1,\dots,L.
  \end{align*}
\end{assumption}
The smoothing and approximation property imply two grid convergence, with convergence rate given by
\begin{align*}
  \rho\left(\mat{E}^{TG}_{\ell}\right) &\leq \norm{\mat{E}^{CG}_{\ell}\left(\mat{E}^{S}_{\ell}\right)^{\nu_{1}+\nu_{2}}}_{2} \numberthis \label{eq:tgconv}\\
  &\leq \norm{\mat{E}^{CG}_{\ell}\mat{A}_{\ell}^{-1}}_{2}\norm{\mat{A}_{\ell}\left(\mat{E}^{S}_{\ell}\right)^{\nu_{1}+\nu_{2}}}_{2} \\
  &\leq C_{A}\eta\left(\nu_{1}+\nu_{2}\right).
\end{align*}
The right-hand side is less than one provided \(\nu_{1}+\nu_{2}\) is large enough.

\begin{assumption}\label{as:smoother}
  The smoothener is non-expansive, i.e.\ \(\rho\left(\mat{E}^{S}_{\ell}\right)=\norm{\mat{E}^{S}_{\ell}}_{A}\leq 1\), and there exists \(C_{S}>0\) such that
  \begin{align*}
    \norm{\left(\mat{E}^{S}_{\ell}\right)^{\nu}}_{2} & \leq C_{S} \quad \nu\geq 1,~\ell=1,\dots,L.
  \end{align*}
\end{assumption}
\Cref{as:smoother} permits to show that the two grid method is also a contraction with respect to the spectral norm \(\norm{\bullet}_{2}\):
\begin{align*}
  \norm{\mat{E}^{TG}_{\ell}}_{2} &\leq \norm{\left(\mat{E}^{S}_{\ell}\right)^{\nu_{2}}}_{2} \norm{\mat{E}^{CG}_{\ell}\left(\mat{E}^{S}_{\ell}\right)^{\nu_{1}}}_{2}
  \leq C_{S} C_{A}\eta\left(\nu_{1}\right).
\end{align*}
While this result is weaker than \cref{eq:tgconv}, it will be useful in the fault-prone case.

\begin{assumption}\label{as:prolongation}
  There exist positive constants \(\underline{C}_{p}\) and \(\overline{C}_{p}\) such that
  \begin{align*}
    \underline{C}_{p}^{-1}\norm{x_{\ell}}_{2} & \leq \norm{\mat{P} x_{\ell}}_{2}\leq \overline{C}_{p}\norm{x_{\ell}}_{2} \quad \forall x_{\ell}\in \RR^{n_{\ell}},~\ell=0,\dots,L-1.
  \end{align*}
\end{assumption}
\Cref{as:smoother,as:prolongation} allow to extend the convergence theory to the multilevel case with \(\gamma\geq2\).
The most interesting case in practice is the W-cycle (\(\gamma=2\)).
\begin{align*}
  \norm{\mat{E}_{\ell}}_{2} &= \norm{\left(\mat{E}^{S}_{\ell}\right)^{\nu_{2}} \left[\mat{I}-\mat{P}\left(\mat{I}-\mat{E}_{\ell-1}^{\gamma}\right)\mat{A}_{\ell-1}^{-1}\mat{R}\mat{A}_{\ell}\right]  \left(\mat{E}^{S}_{\ell}\right)^{\nu_{1}}}_{2} \\
  &\leq \norm{\left(\mat{E}^{S}_{\ell}\right)^{\nu_{2}} \left[\mat{I}-\mat{P}\mat{A}_{\ell-1}^{-1}\mat{R}\mat{A}_{\ell}\right]  \left(\mat{E}^{S}_{\ell}\right)^{\nu_{1}}}_{2} \\
  &\quad + \norm{\left(\mat{E}^{S}_{\ell}\right)^{\nu_{2}}}_{2} \norm{\mat{P}}_{2} \norm{\mat{E}_{\ell-1}}_{2}^{\gamma} \norm{\mat{A}_{\ell-1}^{-1}\mat{R}\mat{A}_{\ell} \left(\mat{E}^{S}_{\ell}\right)^{\nu_{1}}}_{2} \\
  &\leq \norm{\mat{E}^{TG}_{\ell}}_{2} + \underline{C}_{p}\overline{C}_{p}C_{S} \norm{\mat{E}_{\ell-1}}_{2}^{\gamma} \norm{\mat{P}\mat{A}_{\ell-1}^{-1}\mat{R}\mat{A}_{\ell} \left(\mat{E}^{S}_{\ell}\right)^{\nu_{1}}}_{2}.
\end{align*}
Since
\begin{align*}
  \norm{\mat{P}\mat{A}_{\ell-1}^{-1}\mat{R}\mat{A}_{\ell} \left(\mat{E}^{S}_{\ell}\right)^{\nu_{1}}}_{2}
  &\leq\norm{\left(\mat{I}-\mat{P}\mat{A}_{\ell-1}^{-1}\mat{R}\mat{A}_{\ell}\right) \left(\mat{E}^{S}_{\ell}\right)^{\nu_{1}}}_{2} + \norm{\left(\mat{E}^{S}_{\ell}\right)^{\nu_{1}}}_{2} \\
  &\leq C_{S} + C_{A}\eta\left(\nu_{1}\right),
\end{align*}
we obtain the recursive inequality
\begin{align}
  \norm{\mat{E}_{\ell}}_{2} &\leq \norm{\mat{E}^{TG}_{\ell}}_{2} + C_{*}\norm{\mat{E}_{\ell-1}}_{2}^{\gamma} \label{eq:18}
\end{align}
with \(C_{*}=C_{S}\underline{C}_{p}\overline{C}_{p}\left(C_{S}+C_{A}\eta\left(\nu_{1}\right)\right)\).
A classical result \cite{Hackbusch1985_MultiGridMethodsApplications} concerning this inequality is
\begin{lemma} \label{lem:reccursion}
  Suppose the elements of the sequence \(\left\{\eta_{\ell}\right\}_{\ell\geq1}\) satisfy \(0\leq \eta_{1} \leq \xi\) and \(0\leq \eta_{\ell} \leq \xi + C_{*}\eta_{\ell-1}^{\gamma}\), \(\ell\geq 2\).
  If \(\gamma\geq 2\), \(C_{*}\gamma > 1\) and \(\xi\leq \frac{\gamma-1}{\gamma}\frac{1}{\left(C_{*}\gamma\right)^{\frac{1}{\gamma-1}}}\), then
  \begin{align*}
    \eta_{\ell}&\leq \left\{
                 \begin{array}{ll}
                   \frac{\gamma}{\gamma-1}\xi, & \gamma\geq 2, \\
                   \frac{2\xi}{1+\sqrt{1-4C_{*}\xi}}, & \gamma=2
                 \end{array}\right\} < 1.
  \end{align*}
\end{lemma}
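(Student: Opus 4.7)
The plan is to prove both bounds by induction on $\ell$, using the fact that the claimed upper bound is (essentially) a fixed point of the map $t \mapsto \xi + C_{*}t^{\gamma}$.

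For the general $\gamma\ge 2$ case, I would set $B := \frac{\gamma}{\gamma-1}\xi$ and prove $\eta_{\ell}\le B$ inductively. The base case is immediate from $\eta_{1}\le \xi \le B$. For the inductive step, assuming $\eta_{\ell-1}\le B$, the recursion gives $\eta_{\ell}\le \xi + C_{*}B^{\gamma}$, so it suffices to establish $\xi + C_{*}B^{\gamma}\le B$. Rewriting $\xi = \frac{\gamma-1}{\gamma}B$, this reduces to $C_{*}B^{\gamma-1}\le \frac{1}{\gamma}$, i.e.\ $B\le (C_{*}\gamma)^{-1/(\gamma-1)}$. Substituting $B=\frac{\gamma}{\gamma-1}\xi$, this is precisely the hypothesis $\xi\le \frac{\gamma-1}{\gamma}(C_{*}\gamma)^{-1/(\gamma-1)}$. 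The strict inequality $B<1$ then follows because $C_{*}\gamma>1$ implies $(C_{*}\gamma)^{-1/(\gamma-1)}<1$, hence $B\le (C_{*}\gamma)^{-1/(\gamma-1)}<1$.

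For the sharper bound in the case $\gamma=2$, I would use the same strategy but with $B$ chosen as the smaller root of the quadratic fixed-point equation $C_{*}B^{2}-B+\xi = 0$. Explicitly, $B = \frac{1-\sqrt{1-4C_{*}\xi}}{2C_{*}} = \frac{2\xi}{1+\sqrt{1-4C_{*}\xi}}$, where the second form (obtained by rationalising) is the one appearing in the statement and makes manifest that $B$ is real and nonnegative exactly when $1-4C_{*}\xi\ge 0$. The admissibility of this choice is checked against the hypothesis: for $\gamma=2$ the bound on $\xi$ reads $\xi\le \frac{1}{4C_{*}}$, which is precisely the condition for $1-4C_{*}\xi\ge 0$. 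The inductive step is then automatic since $B$ solves $B = \xi + C_{*}B^{2}$: from $\eta_{\ell-1}\le B$ we get $\eta_{\ell}\le \xi + C_{*}B^{2} = B$. Finally, $B<1$ follows from $B\le 2\xi \le \frac{1}{2C_{*}}<1$, where the last inequality uses $C_{*}\gamma = 2C_{*}>1$.

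I do not anticipate a real obstacle here: the argument is a textbook induction. The only point requiring a little care is the arithmetic verification that the hypothesis on $\xi$ matches exactly the condition needed to make the bound a super-solution of the recursion, and the reformulation $\frac{1-\sqrt{1-4C_{*}\xi}}{2C_{*}}=\frac{2\xi}{1+\sqrt{1-4C_{*}\xi}}$ used to present the $\gamma=2$ result in the stated form.
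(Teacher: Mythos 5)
Your proof is correct and is exactly the classical fixed-point induction: the paper itself gives no proof of this lemma, citing it from Hackbusch's book, and your argument (show the stated bound $B$ satisfies $\xi + C_{*}B^{\gamma}\le B$, then induct) is the standard one found there. The only cosmetic omission is that you do not explicitly restate the base case $\eta_{1}\le\xi\le B$ for $\gamma=2$, which holds since $1+\sqrt{1-4C_{*}\xi}\le 2$; everything else, including the rationalisation identity and the chain $B\le 2\xi\le\tfrac{1}{2C_{*}}<1$, checks out.
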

The result show that two grid convergence along with a sufficient number of smoothing steps imply the multilevel scheme is convergent in the absence of faults.

\subsection{Review of Previous Work on the Fault-Prone Two Grid Method}

The iteration matrix of the Fault-Prone Multigrid Method is random, and the usual the spectral radius used in the fault-free case is no longer relevant.
Instead, it transpires that the asymptotic rate of convergence in the fault-prone case is governed by the \emph{Lyapunov spectral radius} of the iteration matrix:
\begin{align*}
  \varrho(\randmat{E}_L)=\lim_{N\rightarrow\infty}\exp\left\{\Exp{\log\norm{\randmat{E}_{L}^{N}}^{1/N}}\right\}.
\end{align*}
We refer the reader to \cite{BougerolLacroix1985_ProductsRandomMatricesWith,CrisantiPaladinEtAl1993_ProductsRandomMatrices,AinsworthGlusa216_IsMultigridMethodFaultTolerant} for further discussion and details relating to the Lyapunov spectral radius.
In particular, \cite{AinsworthGlusa216_IsMultigridMethodFaultTolerant} describes the so called \emph{Replica trick} which gives the following bound for the Lyapunov spectral radius
\begin{align}
  \varrho\left(\randmat{E}_{L}\right) & \leq \sqrt{\rho\left(\Exp{\randmat{E}_{L}\otimes\randmat{E}_{L}}\right)}. \label{eq:replicaTrick}
\end{align}
Under suitable assumptions, it was shown \cite{AinsworthGlusa216_IsMultigridMethodFaultTolerant} that the Lyapunov spectral radius for the Fault-Prone Two Grid Method satisfies
\begin{align}
  \varrho \left(\randmat{E}^{TG}_{L}\right) & \leq \norm{\mat{E}^{TG}_{L}}_{A} +
                                       C\begin{cases}
                                         \varepsilon n_{L}^{(4-d)/2d} & d < 4, \\
                                         \varepsilon \sqrt{\log n_{L}} & d=4, \\
                                         \varepsilon & d > 4,
                                       \end{cases} \label{eq:TGbound}
\end{align}
where \(d\) is the spatial dimension of the underlying PDE, and \(\norm{\bullet}_{A}\) is the usual energy norm.
The estimate suggests, as confirmed by numerical examples, that the convergence rate of the Fault-Prone Two Grid Method degenerates as \(n_{L}\rightarrow\infty\) and eventually fails to converge.
Moreover, it can be shown \cite{AinsworthGlusa216_IsMultigridMethodFaultTolerant} that protection of the prolongation operation against faults (so that \(\randmat{X}^{(P)}_{L}=\mat{I}\)), whilst allowing other sources of faults to remain, results in the Two Grid scheme being resilient:
\begin{theorem}\label{thm:TGNoProlong}
  Let \(\randmat{E}^{TG}_{\ell}\left(\nu_{1},\nu_{2}\right) = \left(\randmat{E}^{S,\text{post}}_{\ell}\right)^{\nu_{2}} \randmat{E}^{CG}_{\ell} \left(\randmat{E}^{S,\text{pre}}_{\ell}\right)^{\nu_{1}}\)
  be the iteration matrix of the Fault-Prone Two Grid Method with faults in smoothener, residual and restriction, and protected prolongation.
  Provided \Cref{as:faults,as:smoothing,as:approximation,as:smoother,as:prolongation} hold, and that \(\mat{N}_{\ell}\) and \(\randmat{X}^{(S)}_{\ell}\) commute, we find that
  \begin{align*}
    \varrho \left(\randmat{E}^{TG}_{\ell}\left(\nu_{1},\nu_{2}\right)\right) & \leq \opnorm{\Exp{\left(\randmat{E}^{TG}_{\ell}\left(\nu_{1},\nu_{2}\right)\right)^{\otimes2}}}^{1/2} \leq \norm{\mat{E}^{TG}_{\ell}\left(\nu_{2},\nu_{1}\right)}_{2} + C\varepsilon,
  \end{align*}
  where the constant \(C\) is independent of \(\varepsilon\) and \(\ell\), and \(\opnorm{\bullet}\) is the double energy norm defined in the Appendix.
\end{theorem}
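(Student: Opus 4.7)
The first inequality is essentially the Replica trick \eqref{eq:replicaTrick} recast in the double energy norm, using the standard fact $\rho(\mat{M}) \leq \opnorm{\mat{M}}$; so the substance of the proof lies in the second inequality. The plan is to factor the tensor-square expectation via independence of the fault matrices across the three stages of the algorithm, bound each factor in $\opnorm{\cdot}$ by a deterministic object close to its fault-free counterpart, and then reassemble them by submultiplicativity.

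Concretely, since the fault matrices in pre-smoother, coarse grid correction, and post-smoother are independent realisations,
\[
  \Exp{(\randmat{E}^{TG}_\ell)^{\otimes 2}} = \Exp{\bigl((\randmat{E}^{S,\text{post}}_\ell)^{\nu_2}\bigr)^{\otimes 2}}\;\Exp{(\randmat{E}^{CG}_\ell)^{\otimes 2}}\;\Exp{\bigl((\randmat{E}^{S,\text{pre}}_\ell)^{\nu_1}\bigr)^{\otimes 2}}.
\]
For the smoothener factors, commutativity of $\mat{N}_\ell$ with $\randmat{X}^{(S)}_\ell$ lets one split $\randmat{E}^{S}_\ell$ into its deterministic mean (a scalar multiple of $\mat{E}^{S}_\ell$ within $C\varepsilon$ of $\mat{E}^{S}_\ell$ by \Cref{as:faults}(2)) plus a mean-zero fluctuation of variance $O(\varepsilon)$ via \Cref{as:faults}(3); propagating this splitting through $\nu_1$ or $\nu_2$ powers and combining with \Cref{as:smoother} gives $\opnorm{\Exp{((\randmat{E}^{S}_\ell)^\nu)^{\otimes 2}}}^{1/2} = \norm{(\mat{E}^{S}_\ell)^\nu}_A + O(\varepsilon)$. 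For the coarse grid correction, protected prolongation gives
\[
  \randmat{E}^{CG}_\ell = \mat{I} - \mat{P}\mat{A}_{\ell-1}^{-1}\randmat{X}^{(R)}_{\ell-1}\mat{R}\randmat{X}^{(\rho)}_\ell\mat{A}_\ell,
\]
whose mean differs from $\mat{E}^{CG}_\ell$ only by an $O(\varepsilon)$ scalar rescaling by \Cref{as:faults}(2), and whose tensor variance takes the form $(\mat{P}\mat{A}_{\ell-1}^{-1})^{\otimes 2}\,\mat{V}(\varepsilon)\,\mat{A}_\ell^{\otimes 2}$ with $\opnorm{\mat{V}(\varepsilon)} = O(\varepsilon)$ by \Cref{as:faults}(3). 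Multiplying the three factors and using the $A$-adjoint relation exchanging $\mat{E}^{TG}_\ell(\nu_1,\nu_2)$ and $\mat{E}^{TG}_\ell(\nu_2,\nu_1)$ inside the norm, which accounts for the swap of $\nu_1$ and $\nu_2$ in the claimed bound, yields the result after taking a square root.

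The hard part is controlling the coarse-grid variance term in $\Exp{(\randmat{E}^{CG}_\ell)^{\otimes 2}}$ uniformly in $\ell$ in the double energy norm. \Cref{as:faults}(3) bounds covariances only in the spectral norm, and this bound must be transported through $\mat{P}\mat{A}_{\ell-1}^{-1}$ on the left and $\mat{R}\mat{A}_\ell$ on the right without picking up dimension-dependent constants. The transport succeeds precisely because prolongation is protected: \Cref{as:prolongation} combined with the Galerkin relation bounds $\mat{P}\mat{A}_{\ell-1}^{-1}\mat{R}$ from the Euclidean to the energy space uniformly in $\ell$, so the variance contribution stays $O(\varepsilon)$. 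When $\randmat{X}^{(P)}_\ell$ is instead present, the analogous transport must cross a random diagonal factor that is not absorbed by any deterministic energy-bounded operator, which is exactly what inflates the bound to $\varepsilon n_L^{(4-d)/2d}$ in \eqref{eq:TGbound} and therefore destroys resilience in the unprotected case.
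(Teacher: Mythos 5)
The paper does not actually reprove this theorem --- it is recalled from \cite{AinsworthGlusa216_IsMultigridMethodFaultTolerant} --- but the Appendix develops the identical machinery (tensor-square expectations, the $\opnorm{\cdot}$ norm, Lemma 10 of the cited paper, the identity $\opnorm{\Exp{(\mat{A}_{\ell-1}^{-1}\randmat{X}^{(R)}\mat{R}\randmat{X}^{(\rho)}\mat{A}_\ell)^{\otimes 2}}} = \norm{\Exp{(\randmat{X}^{(R)}\mat{R}\randmat{X}^{(\rho)})^{\otimes 2}}}_2$) for the multilevel estimate, and your factorisation by independence, the use of $\opnorm{\cdot}$ to kill the $\mat{A}$--factors, and the $A$-adjoint swap $\opnorm{\mat{E}^{TG}_\ell(\nu_1,\nu_2)} = \norm{\mat{E}^{TG}_\ell(\nu_2,\nu_1)}_2$ are exactly the right ingredients.

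The step that does not survive scrutiny as written is \emph{``bound each factor in $\opnorm{\cdot}$ \dots{} and then reassemble them by submultiplicativity.''} If you take $\opnorm{\cdot}$ of the three factor expectations separately and multiply scalars, you obtain $\opnorm{(\mat{E}^S_\ell)^{\nu_2}}\opnorm{\mat{E}^{CG}_\ell}\opnorm{(\mat{E}^S_\ell)^{\nu_1}} + C\varepsilon$, which exceeds $\opnorm{\mat{E}^{TG}_\ell(\nu_1,\nu_2)} + C\varepsilon$ in general: the scalar product destroys the cancellation between smoother and coarse-grid correction that makes the two-grid operator a contraction (roughly $C_S^2 C_A$ instead of $C_S C_A\eta(\nu)$), and so does not give the bound as stated. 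The correct route, matching the Appendix's split in \cref{eq:2}, is to pull out the coherent mean product $\Exp{\randmat{E}^{TG}_\ell}^{\otimes 2} = \bigl(\Exp{\randmat{E}^S_\ell}^{\nu_2}\Exp{\randmat{E}^{CG}_\ell}\Exp{\randmat{E}^S_\ell}^{\nu_1}\bigr)^{\otimes 2}$ as a single unit, bound $\opnorm{\Exp{\randmat{E}^{TG}_\ell}}\le\opnorm{\mat{E}^{TG}_\ell(\nu_1,\nu_2)}+C\varepsilon$ directly, and then telescope the remainder $\Exp{(\randmat{E}^{TG}_\ell)^{\otimes 2}}-\Exp{\randmat{E}^{TG}_\ell}^{\otimes 2}$ into three terms each carrying exactly one variance factor $\mat{C}^{(S)}_\ell$ or $\mat{V}^{(R)},\mat{V}^{(\rho)}$, which is where submultiplicativity and \Cref{as:faults}(3) are legitimately applied. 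Also a minor point: $\Exp{\randmat{E}^S_\ell}=e^{(S)}_\ell\mat{E}^S_\ell+(1-e^{(S)}_\ell)\mat{I}$ is an affine combination of $\mat{E}^S_\ell$ and $\mat{I}$, not a scalar multiple of $\mat{E}^S_\ell$; the $O(\varepsilon)$ closeness to $\mat{E}^S_\ell$ is still correct. The rest of your argument, in particular why protected prolongation makes the variance transport dimension-free while an unprotected $\randmat{X}^{(P)}_\ell$ does not, is sound.
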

\Cref{thm:TGNoProlong} shows that the convergence of the Fault-Prone Two Grid scheme with a protected prolongation does not degenerate as \(n_{L}\rightarrow\infty\).
One of the main aims of the present work is to generalise \Cref{thm:TGNoProlong} to the case of the Fault-Prone Multigrid Method.

\section{Main Results}
\label{sec:main-results}

Consider a second order elliptic PDE with homogeneous Dirichlet boundary conditions on a polyhedral domain \(\Omega\) given in variational form by
\begin{align*}
  \text{Find } u\in V = H_{0}^{1}\left(\Omega\right): a(u,v) = L(v) \quad \forall v\in V.
\end{align*}
Here, the bilinear form \(a\) is continuous and \(V\)-coercive, and the linear form \(L\) is continuous.
Using a shape regular partitioning \(\c{T}_{L}\) of \(\Omega\), the finite dimensional space of continuous piecewise polynomials of order \(k\) is defined as
\begin{align*}
  V_{L}=\left\{v\in H_{0}^{1}\left(\Omega\right) \mid v|_{K} \in \PP_{k}(K)~\forall K \in \c{T}_{L} \right\}.
\end{align*}
Letting \(\phi_{L}\) be the vector of nodal basis functions \(\phi_{L}^{(i)}\), \(i=1,\dots,n_{L}\), the solution \(u_{L}\in V_{L}\) to the PDE is approximated by
\begin{align*}
  \mat{A}_{L}x_{L}=b_{L},
\end{align*}
where \(\mat{A}_{L}=a\left(\phi_{L},\phi_{L}\right)\), \(b_{L}=L\left(\phi_{L}\right)\), and \(u_{L}=\phi_{L}\cdot x_{L}\).

The hierarchy of levels for the multigrid solver is constructed from discretizations of the same problem on nested coarser meshes \(\c{T}_{\ell}\).
Uniform refinement \cite{KroegerPreusser2008_Stability8TetrahedraShortest} or adaptive mesh refinement can be used to obtain successively finer meshes, and the canonical injection of the coarser space into the finer one is used to define prolongation and restriction.

\subsection{Convergence Estimate for the Fault-Prone W-Cycle}

While the two grid method is, as a solver,  mostly of academic interest, the behaviour of the multigrid method under the impact of faults is of great practical importance.
We extend the result of \Cref{thm:TGNoProlong} to the W-cycle.
This theorem mirrors the classical implication of W-cycle convergence by two grid convergence, but applies to the Lyapunov spectral radius needed for the analysis of the Fault-Prone Multigrid Method.
No additional assumptions are required beyond these needed for the classical multigrid analysis.

\begin{restatable}{theorem}{MGNoProlong}
\label{thm:MGNoProlong}
  Provided \cref{as:smoothing,as:approximation,as:smoother,as:prolongation,as:faults} hold, that the prolongation is protected, that the number of smoothing steps is sufficient, and that \(\varepsilon\) is sufficiently small, the fault-prone multigrid method converges with a rate bounded by
  \begin{align*}
    \varrho\left(\randmat{E}_{\ell}\left(\nu_{1},\nu_{2},\gamma\right)\right)
    & \leq
      \begin{cases}
        \frac{\gamma}{\gamma-1}C_{TG} + C\varepsilon, & \gamma\geq 2,\\
        \frac{2}{1 + \sqrt{1-4C_{*}C_{TG}}}C_{TG} + C\varepsilon, & \gamma=2,
      \end{cases}
  \end{align*}
  where \(C\) is independent of \(\varepsilon\) and \(\ell\) and
  \begin{align*}
    C_{TG} &= \max_{\ell}\norm{\mat{E}_{\ell}^{TG}\left(\nu_{2},\nu_{1}\right)}_{2} \leq C_{S}C_{A}\eta(\nu_{2}), \\
    C_{*}  &= C_{S}\underline{C}_{p}\overline{C}_{p}\left(C_{S}+C_{A}\eta(\nu_{2})\right).
  \end{align*}
\end{restatable}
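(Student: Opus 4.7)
The plan is to mirror the classical multilevel argument leading to \cref{eq:18}, but to carry it out in the tensor product space so that we can invoke the replica trick \cref{eq:replicaTrick} and reduce the Lyapunov spectral radius to a quantity controlled by the double energy norm \(\opnorm{\bullet}\). The key quantity will be
\[
  \eta_{\ell} := \opnorm{\Exp{\randmat{E}_{\ell}\otimes\randmat{E}_{\ell}}}^{1/2},
\]
which dominates \(\varrho(\randmat{E}_\ell)\) by \cref{eq:replicaTrick}, and we aim to show it satisfies a recursive inequality of the form \(\eta_\ell \leq \xi + C_* \eta_{\ell-1}^\gamma\) so that \Cref{lem:reccursion} applies with \(\xi = C_{TG}+C\varepsilon\).

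First, I would split the iteration matrix in \cref{eq:Mrecursion} (with \(\randmat{X}_\ell^{(P)}=\mat{I}\)) in analogy with the fault-free splitting used to derive \cref{eq:18}, namely
\[
  \randmat{E}_{\ell}
  = \randmat{E}^{TG}_{\ell}
   + \left(\randmat{E}^{S,\text{post}}_\ell\right)^{\nu_2}
     \mat{P}\randmat{E}^{\gamma}_{\ell-1}\mat{A}_{\ell-1}^{-1}\randmat{X}^{(R)}_{\ell-1}\mat{R}\randmat{X}^{(\rho)}_\ell\mat{A}_\ell
     \left(\randmat{E}^{S,\text{pre}}_\ell\right)^{\nu_1},
\]
where \(\randmat{E}^{TG}_\ell\) is the fault-prone two grid iteration matrix from \cref{eq:twolevel}. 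Tensoring with itself, taking expectations, and using the triangle inequality for \(\opnorm{\bullet}\), I would bound \(\eta_\ell\) by the sum of a two grid contribution and a coarse-error contribution.

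Next, for the two grid contribution I would invoke \Cref{thm:TGNoProlong} directly: since prolongation is protected on every level, the same estimate yields \(\opnorm{\Exp{(\randmat{E}^{TG}_\ell)^{\otimes 2}}}^{1/2} \leq C_{TG} + C\varepsilon\). For the coarse-error contribution, I would exploit that the fault matrices appearing on level \(\ell\) (namely \(\randmat{X}^{(S,\text{pre/post})}_\ell\), \(\randmat{X}^{(\rho)}_\ell\), \(\randmat{X}^{(R)}_{\ell-1}\)) are independent of all fault matrices that make up \(\randmat{E}^{\gamma}_{\ell-1}\). Conditioning on the coarse-level randomness then allows the expectation of the tensor product to factor into a deterministic level-\(\ell\) operator sandwiching \(\Exp{(\randmat{E}^{\gamma}_{\ell-1})^{\otimes 2}}\). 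Using that \(\randmat{E}^{\gamma}_{\ell-1}\) is a product of \(\gamma\) independent realisations and the submultiplicativity of \(\opnorm{\bullet}\) on tensor products of expectations of independent factors, this last double-norm is bounded by \(\eta_{\ell-1}^{\gamma}\). The remaining deterministic factors \(\mat{P}\mat{A}_{\ell-1}^{-1}\mat{R}\mat{A}_\ell(\mat{E}^S_\ell)^{\nu_1}\) and \((\mat{E}^S_\ell)^{\nu_2}\) are controlled exactly as in the fault-free derivation, producing the constant \(C_*=C_S\underline{C}_p\overline{C}_p(C_S+C_A\eta(\nu_2))\) up to corrections of order \(\varepsilon\) coming from \Cref{as:faults}(ii)--(iii).

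Combining the two pieces yields \(\eta_\ell \leq C_{TG} + C\varepsilon + C_* \eta_{\ell-1}^{\gamma}\) together with \(\eta_0=0\), and \Cref{lem:reccursion} applied with \(\xi=C_{TG}+C\varepsilon\) delivers the claimed uniform-in-\(\ell\) bound provided \(\nu_1,\nu_2\) are chosen so that \(C_{TG}\) is small enough to satisfy the hypothesis of \Cref{lem:reccursion} with a little slack to absorb \(C\varepsilon\); this is where the assumption that \(\varepsilon\) is sufficiently small and the number of smoothing steps is sufficient is used. The main obstacle I anticipate is the factorisation step: the double energy norm is not obviously sub-multiplicative on arbitrary products, and one must carefully verify, using the independence of fault realisations across levels and the structure of \(\opnorm{\bullet}\) detailed in the Appendix, that \(\opnorm{\Exp{(\randmat{E}^{\gamma}_{\ell-1})^{\otimes 2}}}\leq \opnorm{\Exp{\randmat{E}_{\ell-1}^{\otimes 2}}}^{\gamma}\) and that the deterministic level-\(\ell\) factors can be pulled out with the same constants as in the fault-free case, up to \(O(\varepsilon)\) perturbations.
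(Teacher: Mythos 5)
Your overall strategy—split $\randmat{E}_\ell = \randmat{E}_\ell^{TG} + \randmat{C}_\ell$, tensor, take expectations, control the pieces, and close with the recursion lemma—is the paper's strategy. But there is a genuine gap in the plan to reduce everything to a \emph{single} recursion $\eta_\ell \leq \xi + C_*\eta_{\ell-1}^\gamma$ with $\eta_\ell := \opnorm{\Exp{\randmat{E}_\ell^{\otimes 2}}}^{1/2}$ and then apply \Cref{lem:reccursion}.

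When you expand $\Exp{\randmat{E}_\ell^{\otimes 2}}$ you get three pieces:
$\Exp{(\randmat{E}_\ell^{TG})^{\otimes 2}}$, $\Exp{\randmat{C}_\ell^{\otimes 2}}$, and the mixed term $\Exp{\randmat{E}_\ell^{TG}\otimes\randmat{C}_\ell}$. The mixed term does not factor: $\randmat{E}_\ell^{TG}$ and $\randmat{C}_\ell$ share the level-$\ell$ fault matrices $\randmat{X}^{(S,\mathrm{pre/post})}_\ell,\randmat{X}^{(\rho)}_\ell,\randmat{X}^{(R)}_{\ell-1}$, so it is not an expectation of a tensor product of independent factors. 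When the paper bounds this term (eq. \cref{eq:9}) it lands on
$C_*\,\opnorm{\Exp{\randmat{E}_\ell^{TG}}}\,\opnorm{\Exp{\randmat{E}_{\ell-1}}}^{\gamma}+C\varepsilon$,
i.e.\ a bound involving the \emph{first}-moment quantity $\opnorm{\Exp{\randmat{E}_{\ell-1}}}$, not the tensor-squared norm you are tracking. To absorb this into your single recursion you would need something like $\opnorm{\Exp{\randmat{E}_{\ell-1}}}\le\opnorm{\Exp{\randmat{E}_{\ell-1}^{\otimes 2}}}^{1/2}$, which is not available without further argument. This is precisely why the paper derives a \emph{pair} of recursive inequalities, \cref{eq:17} for $\opnorm{\Exp{\randmat{E}_\ell}}$ and \cref{eq:1} for $\opnorm{\Exp{\randmat{E}_\ell^{\otimes 2}}}$, and proves a coupled two-sequence version of \Cref{lem:reccursion} (namely \Cref{lem:reccursion2}) before applying it. You should adopt the same two-variable bookkeeping rather than trying to collapse everything onto the second moment.

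By contrast, the obstacle you do flag—the submultiplicativity step $\opnorm{\Exp{(\randmat{E}^{\gamma}_{\ell-1})^{\otimes 2}}}\le\opnorm{\Exp{\randmat{E}_{\ell-1}^{\otimes 2}}}^{\gamma}$—is actually unproblematic: the $\gamma$ factors of $\randmat{E}_{\ell-1}$ in $\randmat{E}^{\gamma}_{\ell-1}$ are independent realisations by convention, so the tensored expectation factors exactly as $\Exp{\randmat{E}_{\ell-1}^{\otimes 2}}^{\gamma}$, and the double energy norm is a conjugated spectral norm, hence submultiplicative. So that worry can be discharged quickly; the real work is in the cross term and the coupled recursion.
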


Setting the fault rate \(\varepsilon=0\) in the above bounds recovers the classical estimates from \Cref{sec:review} for the fault-free multigrid method, since \(\varrho\left(\randmat{E}_{\ell}\right)\) reduces to \(\rho\left(\mat{E}_{\ell}\right)\) for \(\varepsilon=0\).
Just as the Two Grid result, this Theorem makes no assumptions about the origin of the solver hierarchy, and does not rely on a particular choice of smoothener.
We refer the reader to \Cref{sec:proof} for the proof of \Cref{thm:MGNoProlong}.

\subsection{The Effect of Protection of the Prolongation}

\Cref{thm:MGNoProlong} assumes that the prolongation operator is protected.
We investigate whether this assumption can be relaxed by considering a numerical example where we do not protect the prolongation.
Specifically, we consider the Poisson equation in two dimensions
\begin{align}
  \left\{
  \begin{array}{rll}
    -\Delta u &= f & \text{in }\Omega=[0,1]^{2}, \\
    u&=0&\text{on }\partial\Omega,
  \end{array}\right. \label{eq:poisson2d}
\end{align}
and in order to rule out extraneous effects, we use a uniform mesh, a discretization with piecewise linear finite elements, and optimally damped Jacobi pre- and post-smootheners.
In particular, it is well established that the fault-free multigrid method converges in this setting.
\Cref{as:approximation,as:smoother,as:smoothing,as:prolongation} are satisfied, as for example shown in \cite{Hackbusch1994_IterativeSolutionLargeSparseSystemsEquations}.
On every level, residual, restriction, prolongation and smootheners are subject to componentwise faults, as given in \cref{eq:componentwiseFaults}.
We consider problems of size between 1 million and 1 billion degrees of freedom, and fault probabilities between \(10^{-4}\) and \(0.6\).
To minimize floating point contamination in the approximation of the Lyapunov spectral radius, the right-hand side \(f\) is taken to be zero, a non-zero random initial iterate is chosen, and after each iteration the current iterate is renormalised.
In \Cref{fig:poisson2dgeoWcycle-residuals}, we plot the evolution of the residual norm with respect to the iteration number for the case of laissez-faire mitigation in prolongation, restriction, residual and smoothener.
We can see that the number of degrees of freedom \(n_{L}\) adversely affects the rate of convergence, even leading to divergence for large number of unknowns.
Estimates of the Lyapunov spectral radius are obtained as the geometric average of 1000 iterations of Fault-Prone Multigrid, and are displayed in \Cref{fig:poisson2dgeoWcycle-qd-rhoL}.
\begin{figure}
  \centering
  \includegraphics{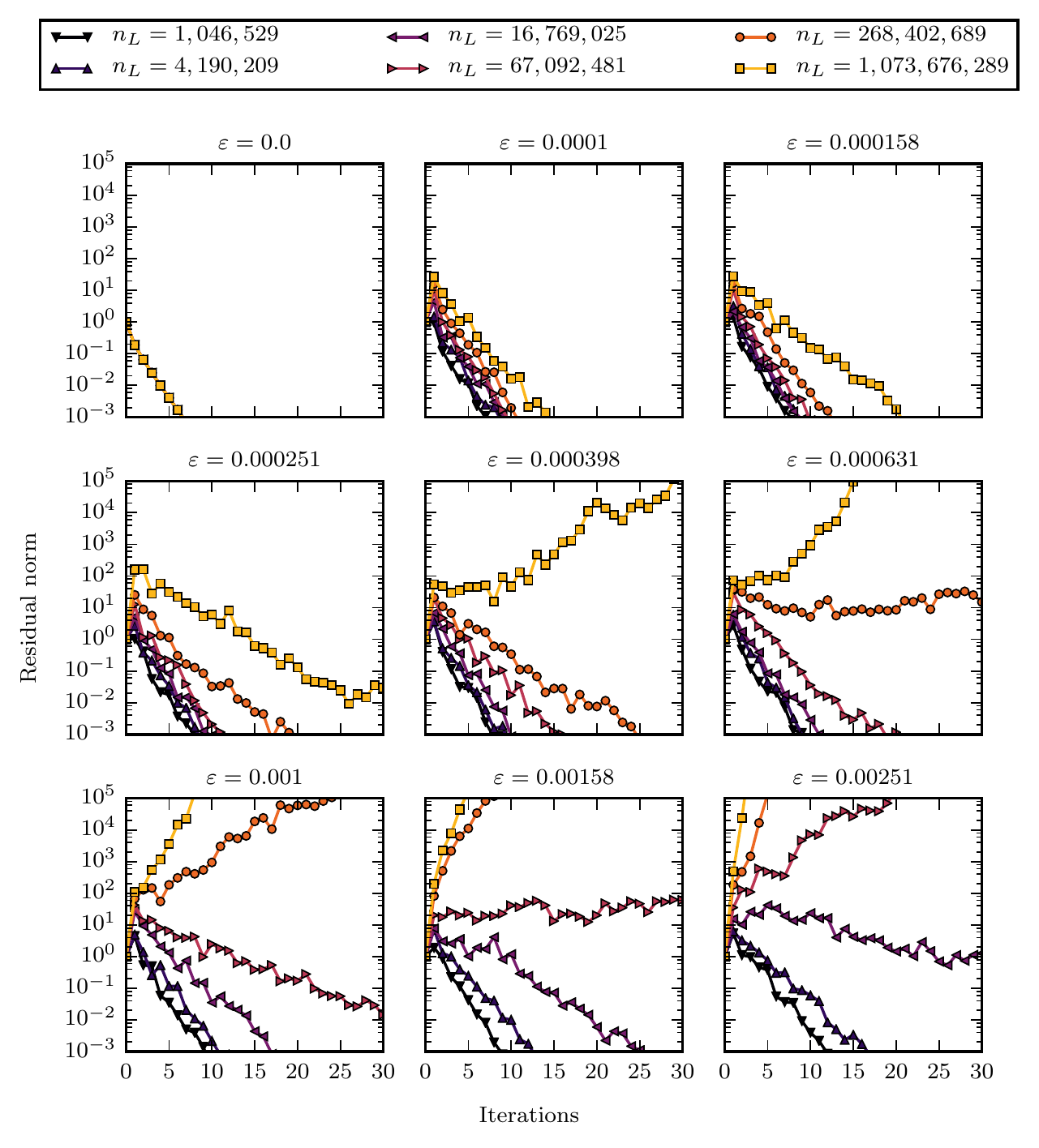}
  \caption{
    Plots of the norm of the residual against iteration number for the Fault-Prone W-Cycle Multigrid Method in the case of discretization of Poisson problem on square domain.
  } \label{fig:poisson2dgeoWcycle-residuals}
\end{figure}



\begin{figure}
  \centering
  \label{fig:poisson2dgeoWcycle-qd-rhoL}
  \label{fig:poisson2dgeoWcycleNoProlong-qd-rhoL}
  \includegraphics{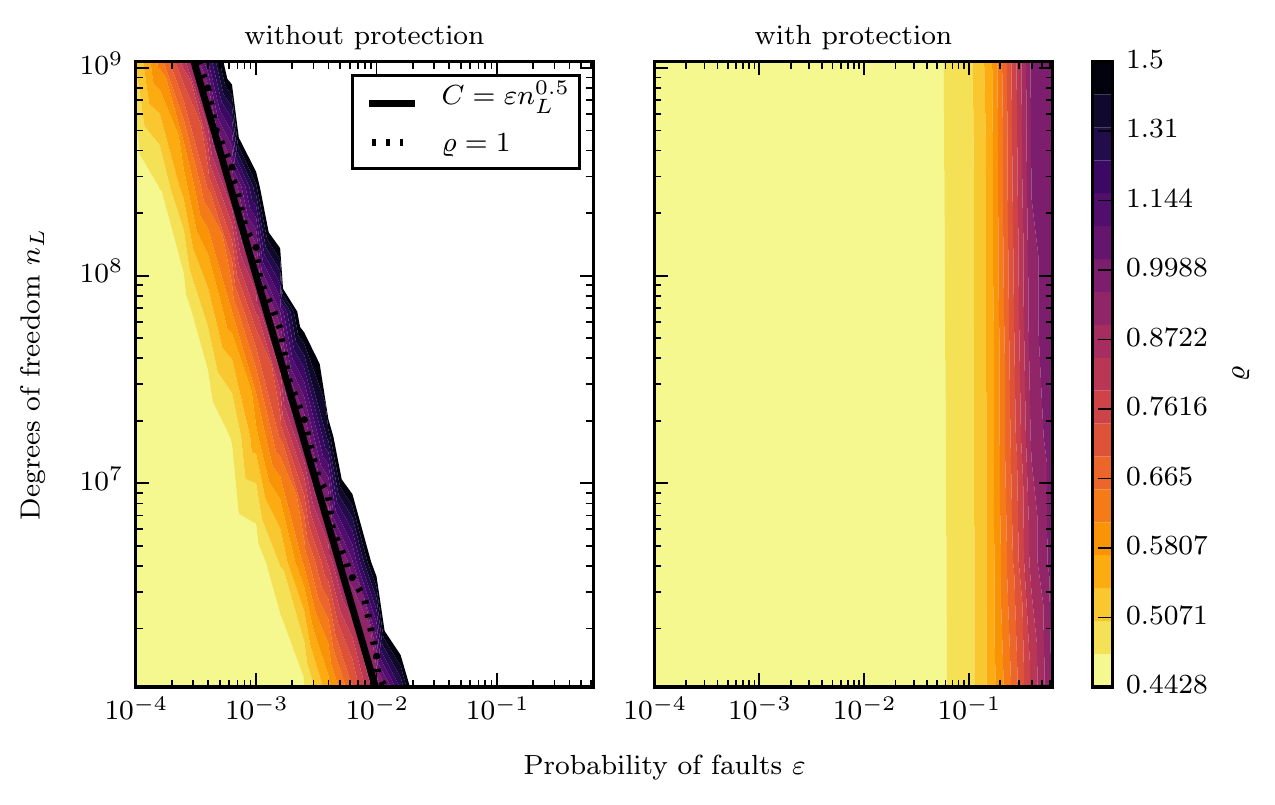}
  \caption{
    Lyapunov spectral radius $\varrho(\randmat{E}_L)$ for the iteration matrix for the Fault-Prone W-Cycle Multigrid method in the case of discretization of Poisson problem on a square domain.
    Without protected prolongation (\textit{left}), and with protected prolongation (\textit{right}).
  }
\end{figure}

In the Two Grid setting which was discussed in \cite{AinsworthGlusa216_IsMultigridMethodFaultTolerant}, we found that the asymptotic rate for this problem scales like \(\sqrt{n_{L} \varepsilon^{2}}\).
Similarly, as seen in \Cref{fig:poisson2dgeoWcycle-qd-rhoL}, we find
\begin{align}
  \varrho\left(\randmat{E}_{L}\right) = C_{0} + \c{O}\left(\sqrt{n_{L} \varepsilon^{2}}\right) \label{eq:5}
\end{align}
for the multilevel case, with \(C_{0}\) being a constant related to the fault-free method.
This means that the method is not fault resilient, since for any given rate of faults \(\varepsilon\), there exists a maximum problem size above which multigrid diverges.
Therefore additional protection is mandatory in the multilevel case as well.

We repeat the same experiments, this time with a protected prolongation, i.e. \(\randmat{X}_{\ell}^{(P)}=\mat{I}\).
This produces the desired independence of the problem size, as predicted by \Cref{thm:MGNoProlong} and as shown by the evolution of the residual in \Cref{fig:poisson2dgeoWcycleNoProlong-residuals} and by the estimated rate of convergence in \Cref{fig:poisson2dgeoWcycleNoProlong-qd-rhoL}.
We further illustrate the results with several test problems that pose more of a challenge to the multigrid solver.

\begin{figure}
  \centering
  \includegraphics{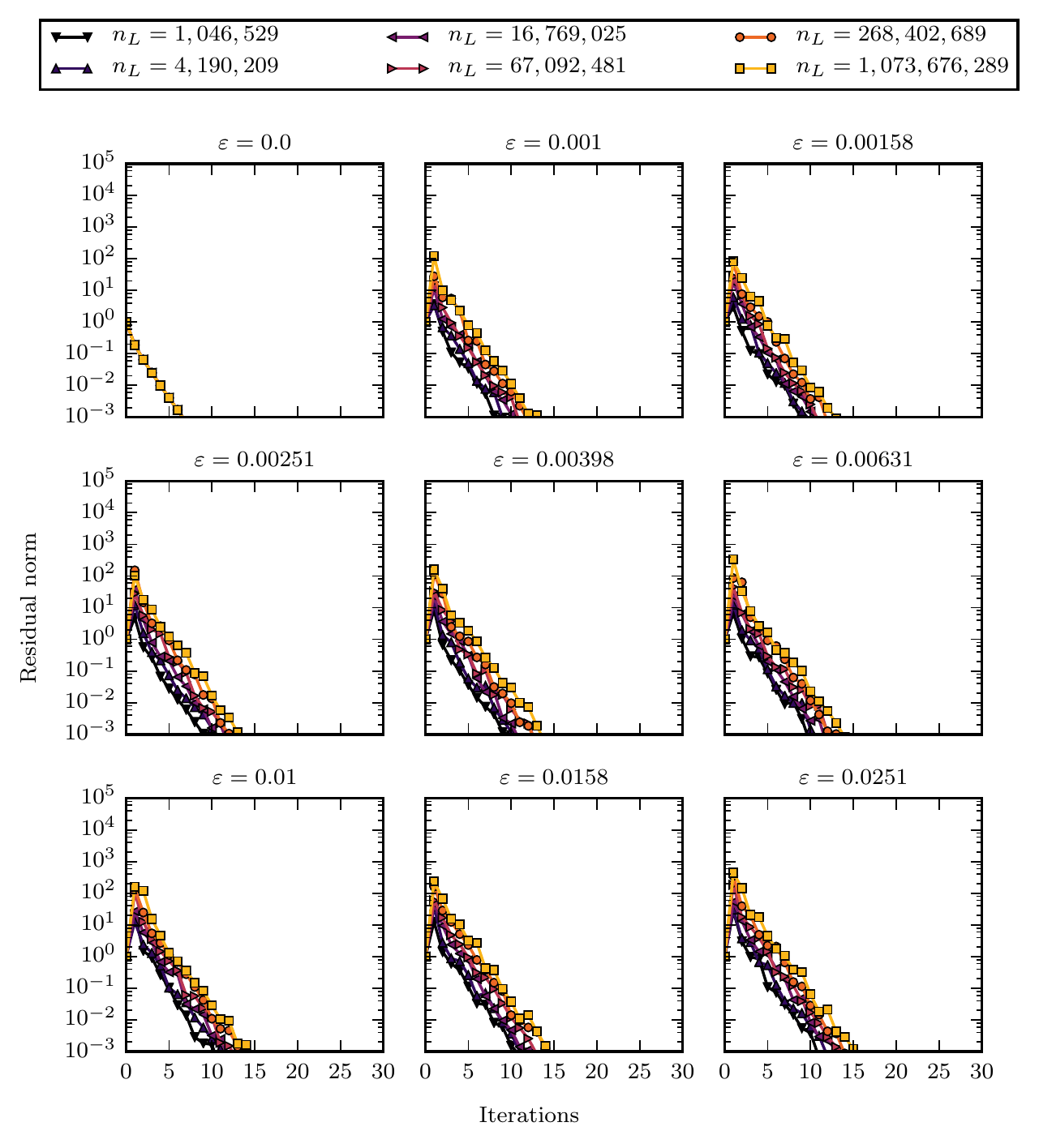}
  \caption{
    Plot of the norm of the residual against iteration number for the Fault-Prone W-Cycle Multigrid Method with protected prolongation in the case of discretization of Poisson problem on square domain.
  } \label{fig:poisson2dgeoWcycleNoProlong-residuals}
\end{figure}

\subsection{Adaptively Refined Meshes}

A second numerical example illustrates the results of \Cref{thm:MGNoProlong} for the case of an adaptively refined mesh.
We consider the 2D magnetostatics problem for a three phase 6/4 switched reluctance motor as depicted in \Cref{fig:motorMesh}.
Gauss's law and Amp\`ere's law are given by
\begin{align*}
  \Div \vec{B} &= 0, &
  \curl \vec{H} &= J,
\end{align*}
where \(\vec{B}\) is the magnetic flux density, \(\vec{H}\) the magnetic field intensity and \(J\) the current density.
\(\vec{B}\) and \(\vec{H}\) are linked through the constitutive relation \(\vec{B} = \mu \vec{H}\) with magnetic permeability \(\mu\).
Using the magnetic vector potential \(\vec{B}=\vec{\curl}~ u\), the system can be rewritten as
\begin{align*}
  -\frac{1}{\mu}\Delta u &= J.
\end{align*}
This gives rise to a variational problem with
\begin{align*}
  a(u,v)&= \int_{\Omega} \frac{1}{\mu}\grad u \cdot \grad v, &
  F(v) &= \int_{\Omega} J v.
\end{align*}
The permeability \(\mu\) is \(5200\) in the rotor and the stator, and \(1\) everywhere else.
The current density \(J\) is unity in the coils and \(0\) everywhere else.
Using continuous piecewise linear finite elements and classical residual-based local a posteriori error indicators \cite{ErnGuermond2004_TheoryPracticeFiniteElements}, we adaptively refine an initial mesh shown in \Cref{fig:motorMesh}.
We use Red-Green refinement \cite{BankShermanEtAl1983_RefinementAlgorithmsDataStructures} coupled with a D\"orfler marking strategy \cite{Doerfler1996_ConvergentAdaptiveAlgorithmPoissonsEquation} with refinement parameter 0.5.
The solver hierarchy is generated without injection of faults; then the problem is solved for componentwise faults and varying fault rates.

\Cref{fig:motor-qd-rhoL} depicts the approximate rate of convergence of the W-Cycle with one step of Jacobi pre- and post-smoothing each and without protection of the prolongation.
We can see that although neither the W-Cycle nor the adaptively refined mesh are covered by the two grid result from \cite{AinsworthGlusa216_IsMultigridMethodFaultTolerant}, the convergence estimate is qualitatively correct.
Added protection of the prolongation operation leads to a fault resilient method, as shown in \Cref{fig:motorNoProlong-qd-rhoL}.
Small variations with respect to the number of degrees of freedom are due to varying coarsening ratios of the levels.
The results match \Cref{thm:MGNoProlong}, even though \Cref{as:approximation} is not satisfied.

\begin{figure}
  \centering
  \includegraphics{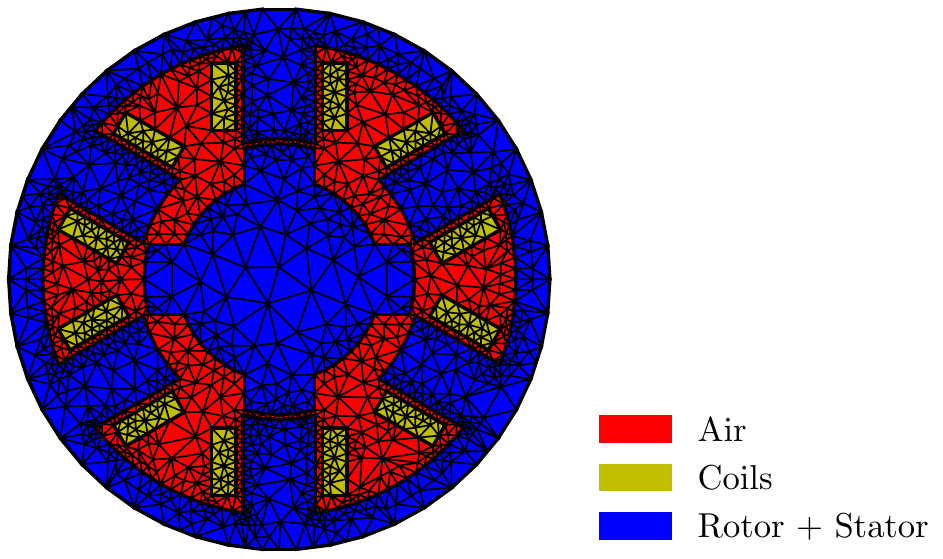}
  \caption{
    Initial mesh for the motor problem.
  } \label{fig:motorMesh}
\end{figure}



\begin{figure}
  \centering
  \label{fig:motor-qd-rhoL}
  \label{fig:motorNoProlong-qd-rhoL}
  \includegraphics{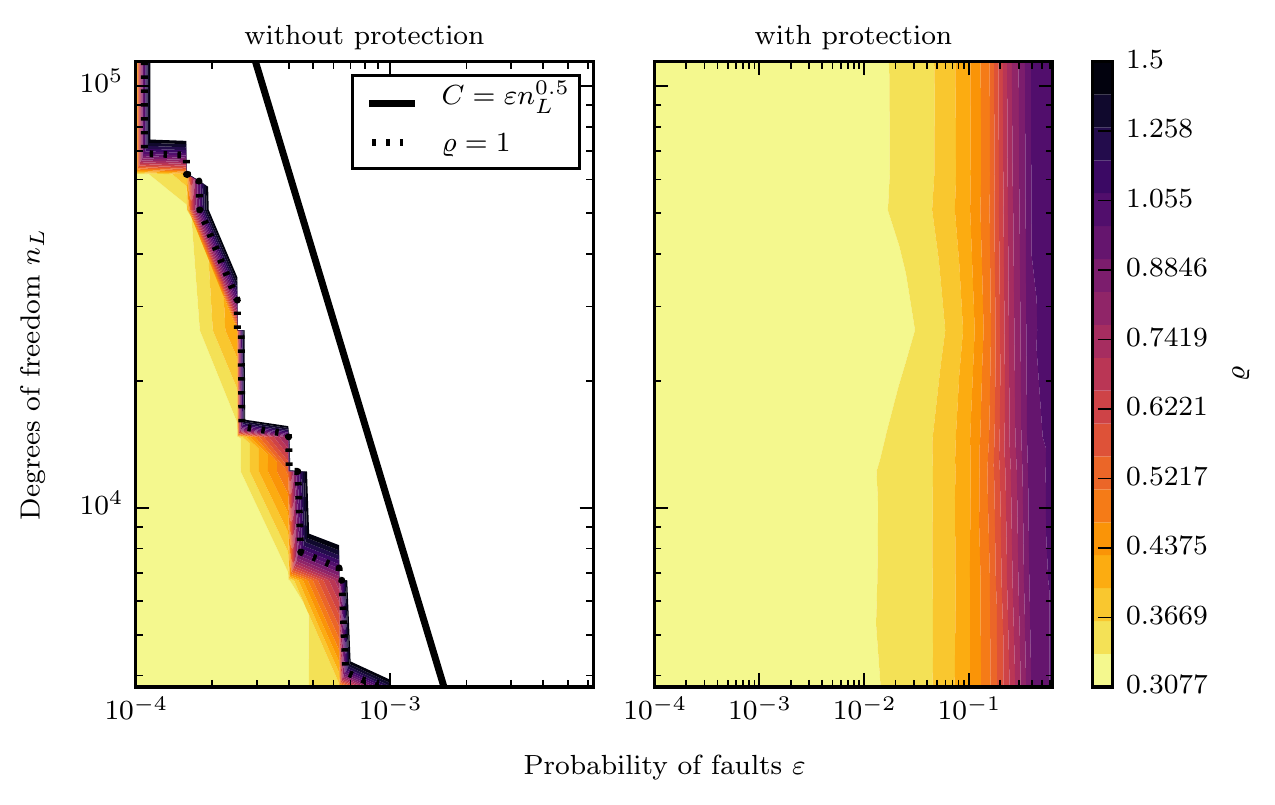}
  \caption{
    Lyapunov spectral radius $\varrho(\randmat{E}_L)$ for the iteration matrix for the Fault-Prone W-Cycle Multigrid Method in the case of discretization of the motor problem.
    Without protected prolongation (\textit{left}), and with protected prolongation (\textit{right}).
  }
\end{figure}

\subsection{Higher Spatial Dimension and Higher Order Finite Elements}

We now demonstrate that fault resilience of the W-cycle is retained for a 3D partial differential equation and higher order (quadratic) continuous finite elements, and solve the Poisson equation on the Fichera cube.
\begin{align*}
  -\Delta u &= f && \text{in } \Omega=[0,2]^{3}\setminus [1,2]^{3}, \\
  u&=u_{0} && \text{on } \partial\Omega
\end{align*}
The geometry and its uniform meshing are shown in \Cref{fig:fichera}.
The system is partitioned using METIS \cite{KarypisKumar1998_FastHighQualityMultilevel} into blocks of size \(2^{14}\) and distributed over the compute nodes.
We inject nodewise faults as given by \cref{eq:blockwiseFaults}.
The problem sizes considered range from 1.7 million to about 940 million degrees of freedom.
In \Cref{fig:fichera-qd-rhoL}, we show the approximate rate of convergence as well as a level set of the error bound obtained for the Fault-Prone Two Grid Method in \cite{AinsworthGlusa216_IsMultigridMethodFaultTolerant}.
It can be seen that the rate of convergence in the multilevel case behaves like
\begin{align*}
  \varrho\left(\randmat{E}_{L}\right) = C_{0} + \c{\varepsilon\sqrt[6]{n_{L}}},
\end{align*}
where \(C_{0}\) is a constant that is related to the fault-free method.
We notice that the onset of divergent behaviour indeed happens at a slower rate as compared to the 2D setting.
Protection of the prolongation again makes the method fault resilient, as shown in \Cref{fig:ficheraNoProlong-qd-rhoL}.
The choice of higher order elements plays no significant role in the convergence behaviour of the method.

\begin{figure}
  \centering
  \includegraphics[scale=0.4]{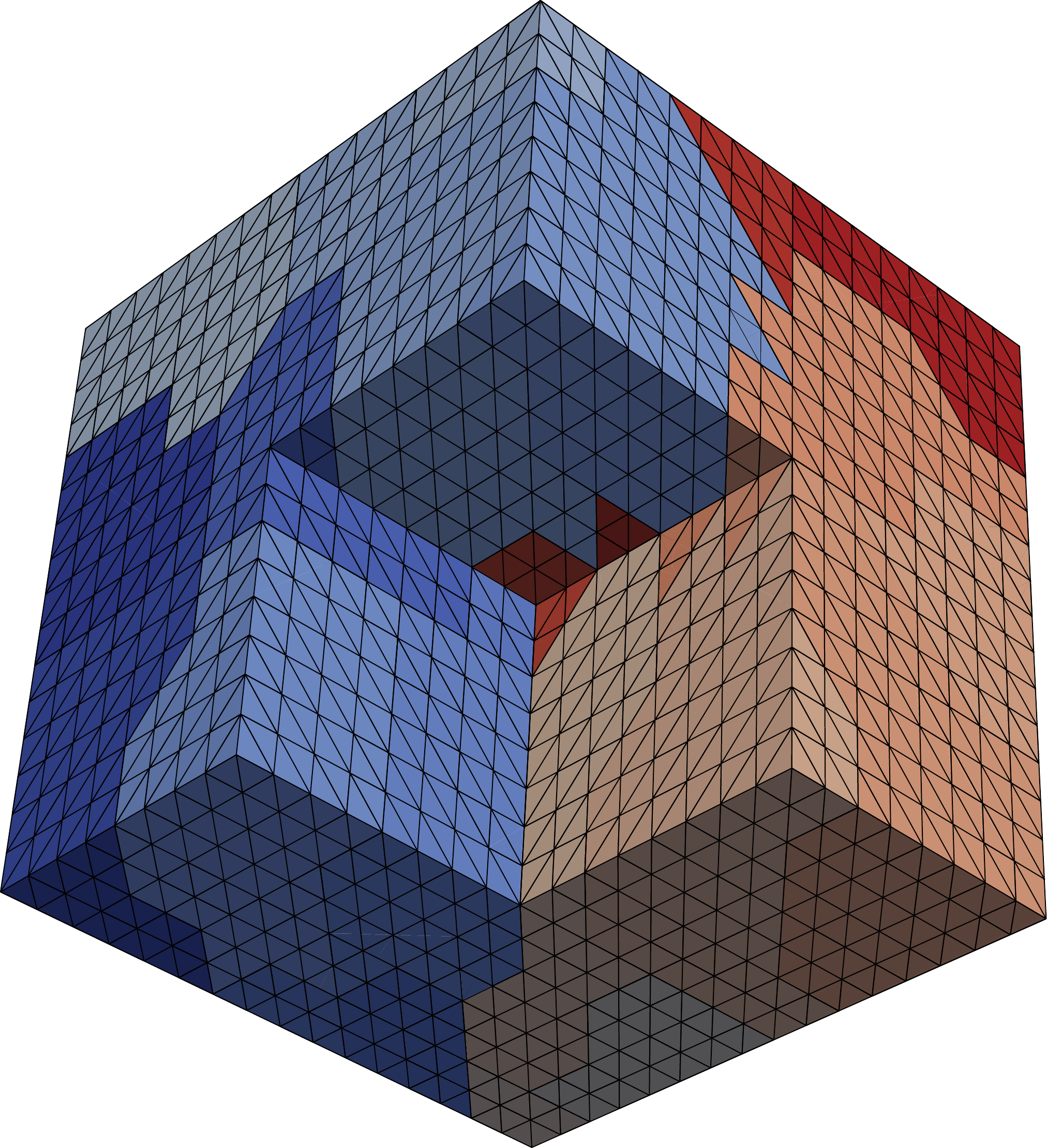}
  \caption{
    Mesh for the Fichera cube, indicating partitioning from METIS.
  } \label{fig:fichera}
\end{figure}



\begin{figure}
  \centering
  \label{fig:fichera-qd-rhoL}
  \label{fig:ficheraNoProlong-qd-rhoL}
  \includegraphics{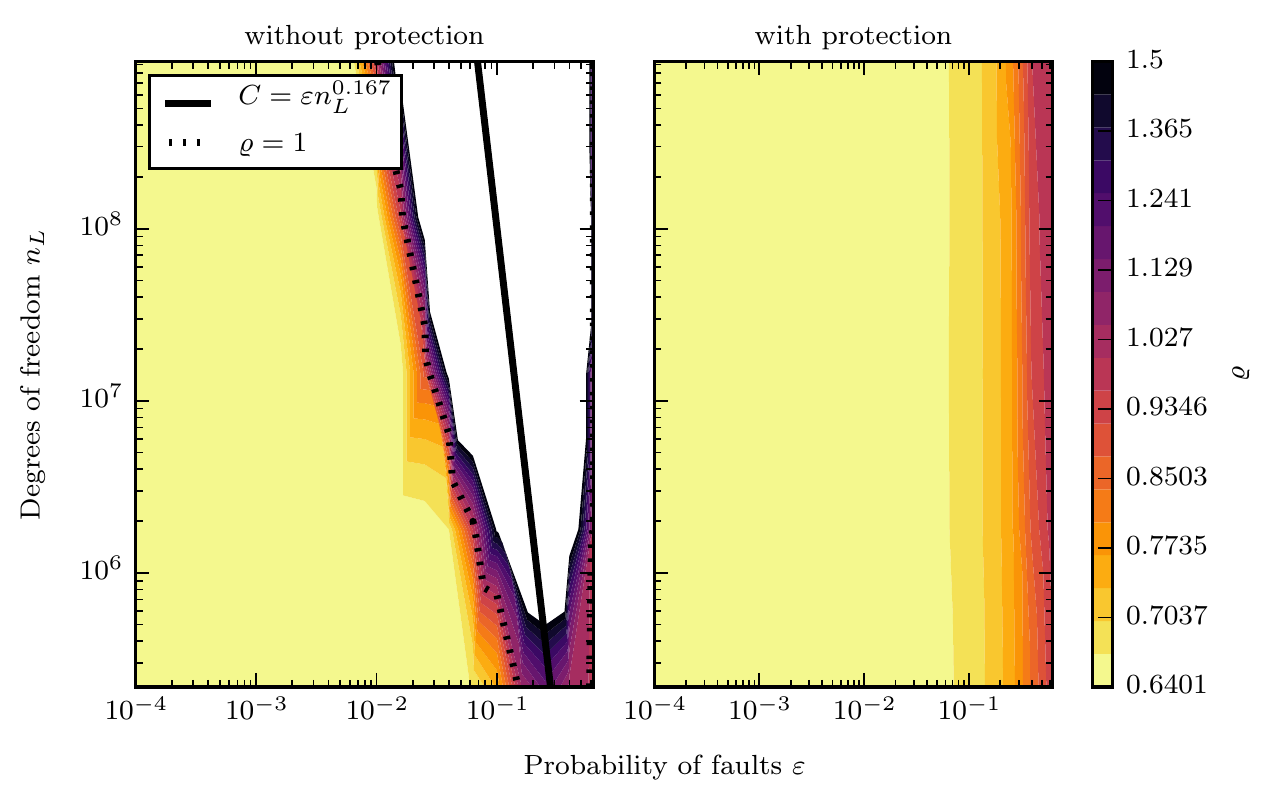}
  \caption{
    Lyapunov spectral radius $\varrho(\randmat{E}_L)$ for the iteration matrix for the Fault-Prone W-Cycle Multigrid Method in the case of discretization of the Fichera cube.
    Without protected prolongation (\textit{left}), and with protected prolongation (\textit{right}).
  }
\end{figure}

\FloatBarrier

\section{Implementation Issues}
\label{sec:implementation}

In the previous examples, we assumed that faults can be perfectly detected.
In practice, faults cannot be perfectly detected, nor is is it possible to perfectly protect the prolongation.
The question therefore arises of whether \Cref{thm:MGNoProlong} has any practical relevance.
In \Cref{sec:detect-soft-faults}, we present one possible simple approach for fault detection and show that the behaviour of Fault-Prone Multigrid found in \cref{eq:5} is recovered.
In \Cref{sec:prot-prol} we turn to the issue of protecting the prolongation operator.

\subsection{Detection of Soft Faults}
\label{sec:detect-soft-faults}

The laissez-faire fault mitigation strategy requires fault detection.
While this is straight-forward in the case of hard faults, soft faults are more problematic.
Various techniques have been suggested \cite{SnirWisniewskiEtAl2014_AddressingFailuresExascaleComputing}.
Here, we present a simple approach based on replication \cite{HeraultRobert2015_FaultToleranceTechniquesHighPerformanceComputing} in which a fault-prone component is repeated \(K\) times (\(K\geq2\)) and the results are compared for consistency.
The replication of node local operations is free of any communication requirements.
\Cref{alg:detection} shows how the strategy is used in conjunction with laissez-faire in the detection of faults in the computation of a generic matrix-vector product \(y=\mat{M}x\).
Instead of the action of \(\mat{M}\), only its fault-prone equivalent \(\randmat{M}\) is available in practice, where \(\randmat{M}\in\RR^{n\times n}\) is a random matrix.

\begin{algorithm}
  \begin{algorithmic}[1]
    \For{$i\leftarrow 1$ \To{} $n$}
    \For{$j\leftarrow 1$ \To{} $K$}
    \State \(w_{j} \leftarrow (\randmat{M}x)_{i}\) \Comment{\(j\)-th replica}
    \EndFor
    \If{\(w_{1}=w_{2}=\dots=w_{K}\) and \(\abs{w_{1}} < 10^{16}\)}
    \State \(y_{i} \leftarrow w_{1}\) \Comment{value accepted}
    \Else
    \State \(y_{i} \leftarrow 0\) \Comment{Laissez-faire}
    \EndIf
    \EndFor
  \end{algorithmic}
  \caption{Detection of faults using \(K\) replicas and laissez-faire mitigation for the fault-prone matrix-vector product \(y=\randmat{M}x\).}
  \label{alg:detection}
\end{algorithm}

The basic idea behind \Cref{alg:detection} is to declare an operation as being fault-free if all replicas are in agreement, otherwise a fault is deemed to have occurred and laissez-faire is triggered.
This means that there are three possible outcomes for the \(i\)-th component of the output \(y\) from \Cref{alg:detection}:
\begin{align*}
  &(C) && \text{ the correct value of \(y_{i}=(\mat{M}x)_{i}\) is returned,} \\
  &(M) && \text{ an error is detected, the laissez-faire mitigation is triggered, and \(y_{i}=0\)}, \\
  &(U) && \text{ a fault occurs but remains undetected, and \(y_{i}\) is a corrupted value.}
\end{align*}
Our analysis in \Cref{sec:main-results} caters for the cases (\(C\)) and (\(M\)), but does not take account of case (\(U\)).
Suppose that the probability of a replica being corrupted is \(\varepsilon\ll 1\).
Then the probability of \emph{all} \(K\) replicas being corrupted in \emph{exactly} the same way is \(\c{O}(\varepsilon^{K})\), showing that the likelihood of (\(U\)) occurring is exponentially small in \(K\).
Hence, the probabilities of the outcomes of \Cref{alg:detection} are
\begin{align*}
  \Pr{C} &=1-\c{O}(\varepsilon), \\
  \Pr{M} &=\c{O}(\varepsilon), \\
  \Pr{U} &=\c{O}(\varepsilon^{K}).
\end{align*}

We illustrate the effect of using our ad-hoc fault detection strategy given in \Cref{alg:detection} in \Cref{fig:poisson2dgeoSoftWcycle2-qd-rhoL} for the Poisson problem \cref{eq:poisson2d}.
In particular, the faults are modelled using bit flipping in which any bit in the floating point representation is flipped with a small but non-zero probability such that the overall probability of a floating point number being corrupted is \(\varepsilon\).
The results are given in \Cref{fig:poisson2dgeoSoftWcycle2-qd-rhoL} in the simplest case where \(K=2\) replicas are used in \Cref{alg:detection} and all operations.
It is observed that, even in this simplest case, the convergence behaviour mirrors that which would be obtained with perfect detection.

\begin{figure}
  \centering
  \includegraphics{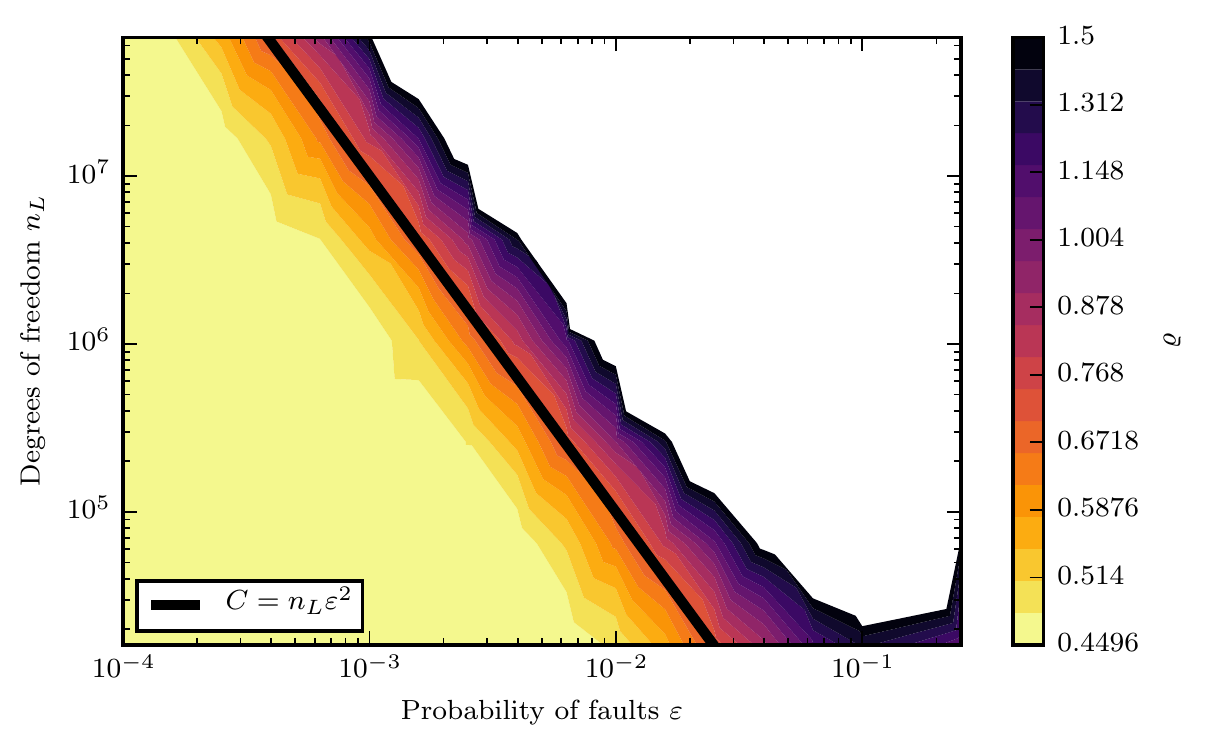}
  \caption{
    Lyapunov spectral radius $\varrho(\randmat{E}_L)$ for the iteration matrix for the Fault-Prone W-Cycle Multigrid Method in the case of discretization of the 2D Poisson problem with fault detection using \(K=2\) replicas in every operation.
  } \label{fig:poisson2dgeoSoftWcycle2-qd-rhoL}
\end{figure}

\subsection{Protection of the Prolongation}
\label{sec:prot-prol}

\Cref{thm:MGNoProlong} requires that the prolongation operations
\begin{align*}
  x_{\ell}\leftarrow x_{\ell} + \mat{P}e_{\ell-1}
\end{align*}
are computed exactly, i.e. without \emph{any} faults.
This is clearly not possible in practice.
We have seen that in the absence of full protection of the prolongation, the Fault-Prone Multigrid converges at a rate given by \cref{eq:5}, where \(\varepsilon\) is the underlying failure rate of the machine.
Moreover, \Cref{thm:MGNoProlong} suggests that the \(\c{O}(\sqrt{n_{L}\varepsilon^{2}})\) term is entirely due to faults in the prolongation (with the faults coming purely from other components of multigrid contributing with higher order terms).
At any rate, it is clear that if we can enhance the reliability of the prolongation \emph{sufficiently} (without being necessarily exact), then one can expect to ameliorate the factor \(\c{O}\left(\sqrt{n_{L}\varepsilon^{2}}\right)\) in the bound of the Lyapunov spectral radius, e.g. by obtaining a growth of \(\c{O}\left(n_{L}\varepsilon^{\alpha}\right)\) for some \(\alpha>2\).

Is it possible to improve the likelihood of detecting and mitigating faults in the prolongation beyond \(\Pr{M}=\c{O}\left(\varepsilon\right)\)?
\Cref{alg:detection} may be regarded as being overly conservative.
For instance, if all but one of the replicas are in agreement, then intuitively it seems likely that the majority are correct.
\Cref{alg:protection} implements the approach suggested by this argument by looking for agreement amongst a subset of \(k_{P}\) of the \(K_{P}\) replicas for the prolongation.
\begin{algorithm}
  \begin{algorithmic}[1]
    \For{$i\leftarrow 1$ \To{} $n_{\ell}$}
    \For{$j\leftarrow 1$ \To{} $K_{P}$}
    \State \(w_{j} \leftarrow (\randmat{P}e_{\ell-1})_{i}\) \Comment{\(j\)-th replica}
    \If{\(k_{P}\) replicas have matching value \(w\) and \(\abs{w}<10^{16}\)}
    \State \((x_{\ell})_{i} \leftarrow (x_{\ell})_{i}  + w\) \Comment{value accepted}
    \State \Break
    \EndIf
    \EndFor
    \EndFor
  \end{algorithmic}
  \caption{Protection of the fault-prone prolongation \(x_{\ell}\leftarrow x_{\ell}+\randmat{P}e_{\ell-1}\) with up to \(K_{P}\) replicas, acceptance threshold \(k_{P}\) and laissez-faire mitigation.}
  \label{alg:protection}
\end{algorithm}
This added freedom alters the likelihood at which the three events occur in the prolongation:
\begin{align*}
  \Pr{C_{P}} & =1-\c{O}(\varepsilon^{K_{P}-k_{P}+1}), \\
  \Pr{M_{P}} & =\c{O}(\varepsilon^{K_{P}-k_{P}+1}), \\
  \Pr{U_{P}} & =\c{O}(\varepsilon^{k_{P}}).
\end{align*}
 In particular, when \Cref{alg:protection} is applied to the computation of the prolongation with parameters \(k_{P}=3\) and \(K_{P}=4\), we obtain an overall rate of \(\Pr{M_{P}}=\c{O}(\varepsilon^{2})\) at which mitigation occurs.
 \Cref{fig:poisson2dgeoSoftWcycle4-qd-rhoL} shows the results obtained by applying \Cref{alg:protection} with \(k_{P}=3\) and \(K_{P}=4\) to the prolongation, and \Cref{alg:detection} with \(K=3\) to all other operations.
 It is seen that the strategy has resulted in the factor \(\c{O}\left(\sqrt{n_{L}\varepsilon^{2}}\right)\) in the Lyapunov spectral radius being replaced by \(\c{O}\left(n_{L}\varepsilon^{\alpha}\right)\) with \(\alpha=3\).
\begin{figure}
  \centering
  \includegraphics{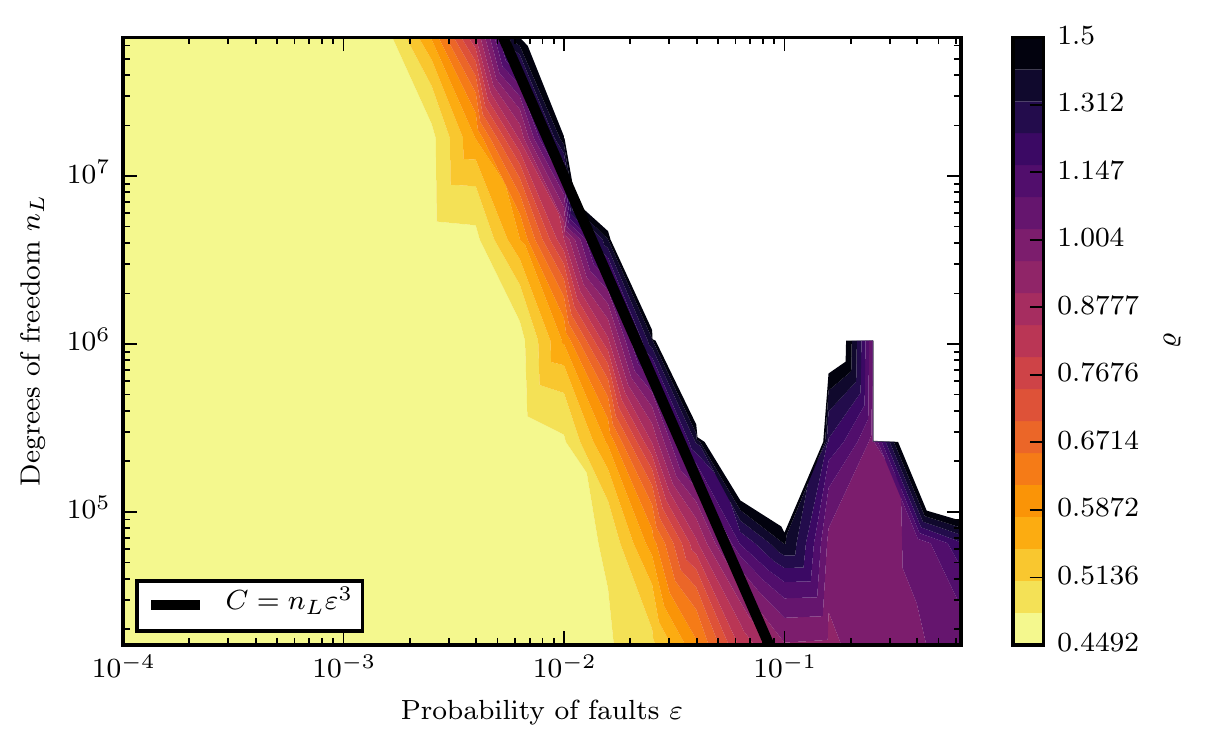}
  \caption{
    Lyapunov spectral radius $\varrho(\randmat{E}_L)$ for the iteration matrix for the Fault-Prone W-Cycle Multigrid Method with \(K_{P}=4\) replicas and acceptance threshold \(k_{P}=3\) in the prolongation and \(K=3\) replicas in all other operations.
  } \label{fig:poisson2dgeoSoftWcycle4-qd-rhoL}
\end{figure}

Would a cheaper detection and protection strategy (e.g. with \(K_{P}=3\), \(k_{P}=2\) and \(K=2\)) suffice?
\Cref{fig:softFaultCurves} shows the results obtained employing a variety of different choices for \(K\), \(K_{P}\) and \(k_{P}\).
It is seen that the cheapest strategy that results in the growth \(\c{O}\left(\sqrt{n_{L}\varepsilon^{2}}\right)\) being replaced by \(\c{O}\left(n_{L}\varepsilon^{\alpha}\right)\), \(\alpha>2\), is indeed \(K_{P}=4\), \(k_{P}=3\) and \(K=3\).

\begin{figure}
  \centering
  \includegraphics{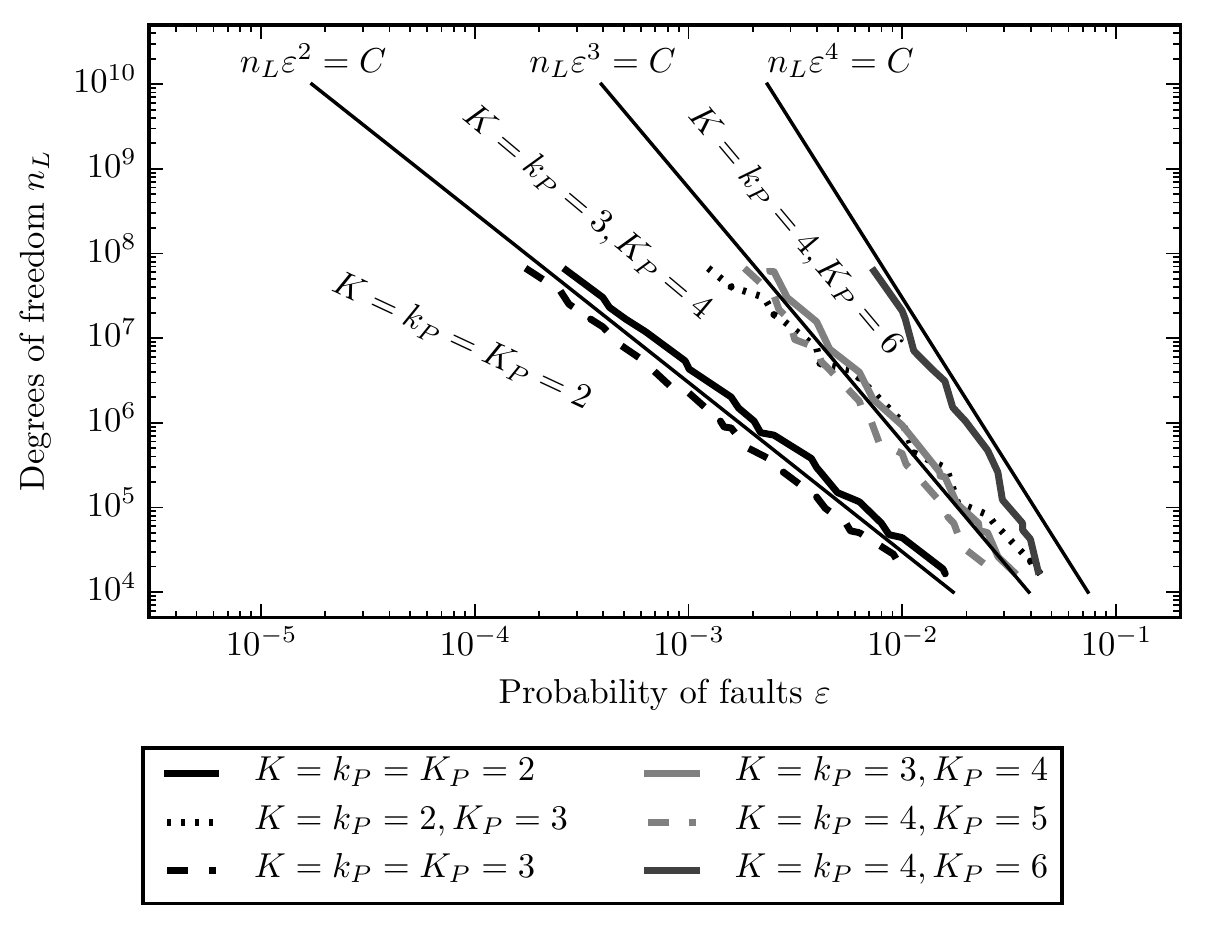}
  \caption{
    Level sets $\varrho(\randmat{E}_L)=0.57$ (50\% more iterations than the ideal fault-free case) for different levels of detection and protection in the case of the 2D Poisson problem.
    Optimal parameter combinations are marked in thick solid lines.
    The obtained optimal detection and protection parameters for each region in the \(\varepsilon\)-\(n_{L}\)-plane are marked in the plot.
  } \label{fig:softFaultCurves}
\end{figure}

Moreover, if one wishes to obtain a given rate \(\alpha\geq2\), then one can choose \(K=k_{P}=\alpha\) and \(K_{P}=2\alpha-2\).
As problems in three spatial dimensions are more resilient to the effect of laissez-faire mitigation, as reflected by \cref{eq:TGbound} and the numerical evidence in \Cref{sec:main-results}, we expect the strategy \(K=K_{P}=k_{P}=\alpha\) to be sufficient for \(2\leq\alpha\leq 6\).

\section{Conclusion}

In this work, we extended previous results concerning the convergence of the Fault-Prone Two Grid Method to the multigrid case, and showed that, if the prolongation is protected against faults, then the resulting Fault-Prone Multigrid Method is resilient.
Numerical examples illustrated this result also holds in a variety of settings where the theory does not apply, along with the necessity of protecting the prolongation.
Finally, we presented one possible simple fault detection strategy and showed that the resulting algorithm is also resilient.

\FloatBarrier

\appendix

\section{Proof of \texorpdfstring{\Cref{thm:MGNoProlong}}{Theorem 3}}
\label{sec:proof}

Throughout the appendices, \(C\) will be a generic constant whose value can change from line to line, but which is independent of \(\ell\) and \(\varepsilon\).

\begin{definition}[Energy norms]
  For matrices \(\mat{Z}\in\RR^{n_{\ell}\times n_{k}}\), we define the energy norm \(\norm{\mat{Z}}_{A}\) and the double energy norm \(\norm{\mat{Z}}_{A^{2}}\) as
  \begin{align*}
    \norm{\mat{Z}}_{A} &= \norm{\mat{A}_{\ell}^{\frac{1}{2}} \mat{Z} \mat{A}_{k}^{-\frac{1}{2}}}_{2},
    & \norm{\mat{Z}}_{A^{2}} &= \norm{\mat{A}_{\ell} \mat{Z} \mat{A}_{k}^{-1}}_{2}.
  \end{align*}
  For matrices \(\mat{W}\in\RR^{n_{\ell}^{2}\times n_{k}^{2}}\), we define the tensor energy norm \(\norm{\mat{W}}_{A}\) and the tensor double energy norm \(\norm{\mat{W}}_{A^{2}}\) as
  \begin{align*}
    \norm{\mat{W}}_{A} &= \norm{\left(\mat{A}_{\ell}^{\frac{1}{2}}\right)^{\otimes2} \mat{Z} \left(\mat{A}_{k}^{-\frac{1}{2}}\right)^{\otimes2}}_{2},
    & \norm{\mat{W}}_{A^{2}} &= \norm{\mat{A}_{\ell}^{\otimes2} \mat{Z} \left(\mat{A}_{k}^{-1}\right)^{\otimes2}}_{2}.
  \end{align*}
  In all cases \(\norm{\bullet}_{2}\) is the spectral norm.
\end{definition}

We set
\begin{align*}
  \Exp{\randmat{X}_{\ell}^{(\bullet)}} &= e_{\ell}^{(\bullet)}\mat{I}, & \Var{\randmat{X}_{\ell}^{(\bullet)}} &= \mat{V}_{\ell}^{(\bullet)}.
\end{align*}
and expand
\begin{align*}
  \Exp{\left(\randmat{E}_{\ell}^{S}\right)^{\otimes2}} &= \Exp{\randmat{E}_{\ell}^{S}}^{\otimes2} + \mat{C}_{\ell}^{(S)}, \\
  \mat{C}_{\ell}^{(S)} &= \mat{V}_{\ell}^{(S)} \left(\mat{N}_{\ell}\mat{A}_{\ell}\right)^{\otimes2},
\end{align*}
using Lemma 10 in \cite{AinsworthGlusa216_IsMultigridMethodFaultTolerant}.
Similar to the fault-free case, we derive recursive inequalities for the iteration matrix of the multigrid method.

\begin{theorem}
  If \cref{as:smoothing,as:approximation,as:smoother,as:prolongation,as:faults} hold, then the iteration matrix of the Fault-Prone Multigrid Method satisfies the following recursive inequalities:
  \begin{align}
    \opnorm{\Exp{\randmat{E}_{\ell}\left(\nu_{1},\nu_{2},\gamma\right)}}
    & \leq \opnorm{\Exp{\randmat{E}_{\ell}^{TG}\left(\nu_{1},\nu_{2}\right)}} + C\varepsilon + C_{*}\opnorm{\Exp{\randmat{E}_{l-1}\left(\nu_{1},\nu_{2},\gamma\right)}}^{\gamma}, \label{eq:17}
    \intertext{and}
    \opnorm{\Exp{\randmat{E}_{\ell}\left(\nu_{1},\nu_{2},\gamma\right)^{\otimes2}}}
    & \leq \opnorm{\Exp{\randmat{E}_{\ell}^{TG}\left(\nu_{1},\nu_{2}\right)^{\otimes2}}} + C\varepsilon \label{eq:1}\\
    & \quad +2C_{*}\opnorm{\Exp{\randmat{E}_{\ell}^{TG}\left(\nu_{1},\nu_{2}\right)}} \opnorm{\Exp{\randmat{E}_{l-1}\left(\nu_{1},\nu_{2},\gamma\right)}}^{\gamma} \nonumber \\
    & \quad + C_{*}^{2} \opnorm{\Exp{\randmat{E}_{l-1}\left(\nu_{1},\nu_{2},\gamma\right)^{\otimes2}}}^{\gamma}, \nonumber
    \intertext{with}
    C_{*} & = C_{S}\underline{C}_{p}\overline{C}_{p}\left(C_{S}+C_{A}\eta(\nu_{2})\right). \nonumber
  \end{align}
  The constants \(C\) and \(C_{*}\) are independent of the level \(\ell\) and \(\varepsilon\).
\end{theorem}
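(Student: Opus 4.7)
The plan is to mirror the classical fault-free recursion $\norm{\mat{E}_\ell}_2 \leq \norm{\mat{E}^{TG}_\ell}_2 + C_* \norm{\mat{E}_{\ell-1}}_2^\gamma$ of \cref{eq:18} at the level of the expectations $\Exp{\randmat{E}_\ell}$ and tensor expectations $\Exp{\randmat{E}_\ell^{\otimes 2}}$, using the tensor double energy norm $\opnorm{\cdot}$ throughout. The starting point is the decomposition
\begin{align*}
  \randmat{E}_\ell = \randmat{E}^{TG}_\ell + \randmat{G}_\ell \, \randmat{E}^\gamma_{\ell-1} \, \randmat{H}_\ell,
\end{align*}
with $\randmat{G}_\ell = (\randmat{E}^{S,\text{post}}_\ell)^{\nu_2} \randmat{X}^{(P)}_\ell \mat{P}$ and $\randmat{H}_\ell = \mat{A}_{\ell-1}^{-1} \randmat{X}^{(R)}_{\ell-1} \mat{R} \randmat{X}^{(\rho)}_\ell \mat{A}_\ell (\randmat{E}^{S,\text{pre}}_\ell)^{\nu_1}$. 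By construction $\randmat{G}_\ell$ and $\randmat{H}_\ell$ depend on disjoint collections of level-$\ell$ faults and are jointly independent of $\randmat{E}^\gamma_{\ell-1}$; combined with the paper's convention that powers of random matrices denote products of independent realisations, this yields $\Exp{\randmat{E}^\gamma_{\ell-1}} = \Exp{\randmat{E}_{\ell-1}}^\gamma$ and, via the mixed-product identity $(AB)^{\otimes 2} = A^{\otimes 2} B^{\otimes 2}$, also $\Exp{(\randmat{E}^\gamma_{\ell-1})^{\otimes 2}} = \Exp{\randmat{E}_{\ell-1}^{\otimes 2}}^\gamma$.

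For \cref{eq:17}, taking expectation and exploiting independence factors the correction term into $\Exp{\randmat{G}_\ell} \Exp{\randmat{E}_{\ell-1}}^\gamma \Exp{\randmat{H}_\ell}$; inserting $\mat{A}_{\ell-1}^{\pm 1}$ around the middle factor and applying submultiplicativity of $\opnorm{\cdot}$ reduces its $\opnorm{\cdot}$-norm to $\norm{\mat{A}_\ell \Exp{\randmat{G}_\ell} \mat{A}_{\ell-1}^{-1}}_2 \cdot \opnorm{\Exp{\randmat{E}_{\ell-1}}}^\gamma \cdot \norm{\mat{A}_{\ell-1} \Exp{\randmat{H}_\ell} \mat{A}_\ell^{-1}}_2$. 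The two outer factors are precisely the quantities that appear in the classical derivation of \cref{eq:18}, except that $\Exp{\randmat{X}^{(\bullet)}} = e^{(\bullet)} \mat{I}$ and $\Exp{\randmat{E}^{S,\text{pre/post}}_\ell} = \mat{I} - e^{(S)}_\ell \mat{N}_\ell \mat{A}_\ell$ differ from their fault-free counterparts by $O(\varepsilon)$ perturbations through \Cref{as:faults}(ii). Invoking \Cref{as:smoothing,as:approximation,as:smoother,as:prolongation} as in the fault-free case shows the product of outer factors is at most $C_* + O(\varepsilon)$, and the excess is absorbed into the $C\varepsilon$ term.

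For \cref{eq:1}, I would expand $\randmat{E}_\ell^{\otimes 2}$ into the four pieces $\randmat{E}^{TG,\otimes 2}_\ell$, $\randmat{E}^{TG}_\ell \otimes \randmat{F}_\ell$, $\randmat{F}_\ell \otimes \randmat{E}^{TG}_\ell$ and $\randmat{F}_\ell^{\otimes 2}$, where $\randmat{F}_\ell = \randmat{G}_\ell \randmat{E}^\gamma_{\ell-1} \randmat{H}_\ell$. The quadratic piece factors as $\randmat{G}_\ell^{\otimes 2} (\randmat{E}^\gamma_{\ell-1})^{\otimes 2} \randmat{H}_\ell^{\otimes 2}$, and independence together with the tensor-power identity yields $\Exp{\randmat{F}_\ell^{\otimes 2}} = \Exp{\randmat{G}_\ell^{\otimes 2}} \Exp{\randmat{E}_{\ell-1}^{\otimes 2}}^\gamma \Exp{\randmat{H}_\ell^{\otimes 2}}$; inserting $\mat{A}_{\ell-1}^{\otimes 2}$ and $(\mat{A}_{\ell-1}^{-1})^{\otimes 2}$ and repeating the previous outer-factor analysis in tensor-squared form produces the claimed $C_*^2 \opnorm{\Exp{\randmat{E}_{\ell-1}^{\otimes 2}}}^\gamma$ contribution. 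For each cross term, independence lets me replace the internal $\randmat{E}^\gamma_{\ell-1}$ by the deterministic $\mat{Z} = \Exp{\randmat{E}_{\ell-1}}^\gamma$; the resulting $\Exp{\randmat{E}^{TG}_\ell \otimes (\randmat{G}_\ell \mat{Z} \randmat{H}_\ell)}$ is then split as $\Exp{\randmat{E}^{TG}_\ell} \otimes \Exp{\randmat{G}_\ell \mat{Z} \randmat{H}_\ell}$ plus a covariance remainder of $\opnorm{\cdot}$-size $O(\varepsilon)$, obtained by systematic application of Lemma 10 of \cite{AinsworthGlusa216_IsMultigridMethodFaultTolerant} to each level-$\ell$ fault matrix that is shared between the two tensor legs. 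The factored summand gives the desired $C_* \opnorm{\Exp{\randmat{E}^{TG}_\ell}} \opnorm{\Exp{\randmat{E}_{\ell-1}}}^\gamma$ (using that $\opnorm{\cdot}$ is multiplicative on tensor products), and the covariance remainder is absorbed into $C\varepsilon$.

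The main obstacle will be the bookkeeping of these cross-term covariance corrections. Because $\randmat{E}^{TG}_\ell$ and $\randmat{F}_\ell$ share the same realisation of every level-$\ell$ fault matrix, $\Exp{\randmat{E}^{TG}_\ell \otimes \randmat{F}_\ell}$ does not coincide with $\Exp{\randmat{E}^{TG}_\ell} \otimes \Exp{\randmat{F}_\ell}$, and one must expand each $\Exp{\randmat{Y}^{\otimes 2}} = \Exp{\randmat{Y}}^{\otimes 2} + \mat{C}^{(Y)}$ for the shared fault matrices $\randmat{Y}$, bound each $\opnorm{\mat{C}^{(Y)}}$ by $O(\varepsilon)$ via \Cref{as:faults}(iii), and check that the resulting remainders do not accumulate factors depending on $\ell$ or $n_\ell$. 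This is precisely the type of computation already carried out for the Fault-Prone Two Grid Method in \cite{AinsworthGlusa216_IsMultigridMethodFaultTolerant}, and relies crucially on the smallness assumption on $\varepsilon$ in \Cref{as:faults}(ii) to keep the perturbed smoother $\mat{I} - e^{(S)}_\ell \mat{N}_\ell \mat{A}_\ell$ uniformly stable across levels.
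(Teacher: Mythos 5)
Your overall architecture is the same as the paper's: decompose $\randmat{E}_\ell = \randmat{E}^{TG}_\ell + \randmat{C}_\ell$ with $\randmat{C}_\ell$ the coarse-grid correction contribution, pass to expectations (respectively tensor-squared expectations), exploit the independence of levels to factor $\Exp{\randmat{E}_{\ell-1}^\gamma}=\Exp{\randmat{E}_{\ell-1}}^\gamma$ and $\Exp{\left(\randmat{E}_{\ell-1}^\gamma\right)^{\otimes2}}=\Exp{\randmat{E}_{\ell-1}^{\otimes2}}^\gamma$, and control cross terms by expanding around deterministic means and bounding the resulting covariance remainders. That is exactly the structure of the paper's proof.

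However, there is a genuine gap in your setup. You define $\randmat{G}_\ell = \left(\randmat{E}^{S,\text{post}}_\ell\right)^{\nu_2}\randmat{X}^{(P)}_\ell\mat{P}$, retaining the prolongation fault matrix $\randmat{X}^{(P)}_\ell$. The paper's proof of this theorem omits $\randmat{X}^{(P)}_\ell$, because the theorem — despite its terse hypothesis list — is implicitly stated for the protected-prolongation method (it is used only to prove \Cref{thm:MGNoProlong}, which explicitly assumes protection). This is not just an editorial slip you could carry along harmlessly. While your first inequality \cref{eq:17} would survive the inclusion of $\randmat{X}^{(P)}_\ell$ (since $\Exp{\randmat{X}^{(P)}_\ell}=e^{(P)}\mat{I}$ is a scalar and commutes cleanly through), the tensor-squared inequality \cref{eq:1} would not. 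The offending quantity is the variance part of $\Exp{\left(\randmat{X}^{(P)}_\ell\right)^{\otimes2}}$ measured in $\opnorm{\bullet}$. Concretely, you would need to control a term of the form $\norm{\mat{A}_\ell^{\otimes2}\,\mat{V}^{(P)}_\ell\,\mat{P}^{\otimes2}\left(\mat{A}_{\ell-1}^{-1}\right)^{\otimes2}}_2$, and because $\mat{V}^{(P)}_\ell$ is a diagonal variance matrix that does not commute with $\mat{A}_\ell^{\otimes2}$, the best generic estimate is $\c{O}\!\left(\varepsilon\,\norm{\mat{A}_\ell}_2^{2}\right)$ rather than $\c{O}(\varepsilon)$. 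This scaling with $\norm{\mat{A}_\ell}_2$ (hence with $n_\ell$) is precisely the mechanism responsible for the $\c{O}\left(\sqrt{n_L\varepsilon^2}\right)$ growth in \cref{eq:5} when the prolongation is unprotected, and it would make your claimed absorption of the covariance remainder into a level-independent $C\varepsilon$ false. By contrast, the covariance contributions you do need to handle (from $\randmat{X}^{(S)}_\ell$, $\randmat{X}^{(\rho)}_\ell$, $\randmat{X}^{(R)}_{\ell-1}$) come sandwiched with $\mat{N}_\ell\mat{A}_\ell$ or transfer operators in a way that keeps the $\opnorm{\bullet}$ weighting tame, which is why the paper's quantities $\opnorm{\mat{C}^{(S)}_\ell}$ and the analogue in \cref{eq:13} are uniformly $\c{O}(\varepsilon)$. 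To repair the proposal, set $\randmat{X}^{(P)}_\ell=\mat{I}$ at the outset, i.e.\ take $\randmat{G}_\ell = \left(\randmat{E}^{S,\text{post}}_\ell\right)^{\nu_2}\mat{P}$; the rest of your argument then aligns with the paper's.
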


If \(\varepsilon=0\), the second inequality matches exactly the recursive inequality that we would use for the fault-free case, while the first inequality matches the tensor square power of the same inequality.

\begin{proof}
  Since the proof on level \(\ell\) only involves the levels \(\ell\) and \(\ell-1\), we will drop the first subscript and replace the second one with a subscript \(C\).

  We can write the iteration matrix as
  \begin{align*}
    \randmat{E}\left(\nu_{1},\nu_{2},\gamma\right)
    & = \left(\randmat{E}^{S,\text{post}}\right)^{\nu_{2}}\left(\mat{I}-\mat{P}\left(\mat{I}-\left(\randmat{E}_{C}\right)^{\gamma}\right)\mat{A}_{C}^{-1}\randmat{X}^{(R)}_{C} \mat{R}\randmat{X}^{(\rho)} \mat{A}\right) \left(\randmat{E}^{S,\text{pre}}\right)^{\nu_{1}} \\
    & =\underbrace{\left(\randmat{E}^{S,\text{post}}\right)^{\nu_{2}}\left(\mat{I}-\mat{P}\mat{A}_{C}^{-1}\randmat{X}^{(R)}_{C} \mat{R}\randmat{X}^{(\rho)} \mat{A}\right) \left(\randmat{E}^{S,\text{pre}}\right)^{\nu_{1}}}_{\randmat{E}^{TG}\left(\nu_{1},\nu_{2}\right)} \\
    &\quad+ \underbrace{\left(\randmat{E}^{S,\text{post}}\right)^{\nu_{2}}\mat{P}\left(\randmat{E}_{C}\right)^{\gamma}\mat{A}_{C}^{-1}\randmat{X}^{(R)}_{C} \mat{R}\randmat{X}^{(\rho)} \mat{A}\left(\randmat{E}^{S,\text{pre}}\right)^{\nu_{1}}}_{=:\randmat{C}}.
  \end{align*}
  Since
  \begin{align*}
    \opnorm{\Exp{\left(\randmat{E}^{S}\right)^{\nu_{2}}\mat{P}}}
    & = \norm{\mat{A}\Exp{\randmat{E}^{S}}^{\nu_{2}}\mat{P}\mat{A}_{C}^{-1}}_{2}  \\
    & = \norm{\mat{A}_{C}^{-1}\mat{R}\mat{A}\Exp{\randmat{E}^{S}}^{\nu_{2}}}_{2}   \\
    & \leq \underline{C}_{p} \left(\norm{\Exp{\randmat{E}^{S}}^{\nu_{2}}}_{2}+\norm{\mat{E}^{CG}\Exp{\randmat{E}^{S}}^{\nu_{2}}}_{2}\right)  \\
    & \leq \underline{C}_{p} \left(C_{S}+C_{A}\eta(\nu_{2})\right) + C\varepsilon,  \\
    \opnorm{\Exp{\mat{A}_{C}^{-1}\randmat{X}_{C}^{(R)}\mat{R}\randmat{X}^{(\rho)}\mat{A}}}
    &\leq \left|e_{C}^{(R)}e^{(\rho)}\right| \norm{\mat{R}}_{2}
    \leq \overline{C}_{p} + C\varepsilon, \\
    \opnorm{\Exp{\randmat{E}^{S}}^{\nu_{1}}} &\leq C_{S} + C\varepsilon,
  \end{align*}
  we have by \cref{as:prolongation,as:faults} that
  \begin{align}
    \opnorm{\Exp{\randmat{C}}}
    & \leq \opnorm{\Exp{\randmat{E}^{S}}^{\nu_{2}}\mat{P}} \opnorm{\Exp{\randmat{E}_{C}}}^{\gamma} \opnorm{\Exp{\mat{A}_{C}^{-1}\randmat{X}_{C}^{(R)}\mat{R}\randmat{X}^{(\rho)}\mat{A}}} \opnorm{\Exp{\randmat{E}^{S}}^{\nu_{1}}} \nonumber \\
    & \leq C_{*}\opnorm{\Exp{\randmat{E}_{C}}}^{\gamma} + C\varepsilon. \label{eq:8}
  \end{align}
  Therefore the first inequality of the theorem follows from
  \begin{align*}
    \opnorm{\Exp{\randmat{E}}} & \leq \opnorm{\Exp{\randmat{E}^{TG}}} + \opnorm{\Exp{\randmat{C}}}.
  \end{align*}

  The second inequality is more work, but follows the same path:
  \begin{align}
    \opnorm{\Exp{\randmat{E}^{\otimes2}}}
    & \leq \opnorm{\Exp{\left(\randmat{E}^{TG}\right)^{\otimes2}}} + \opnorm{\Exp{\randmat{C}^{\otimes2}}}
      +2\opnorm{\Exp{\randmat{E}^{TG} \otimes \randmat{C}}}. \label{eq:7}
  \end{align}
  We start by bounding the second term on the right-hand side.
  \begin{align}
    \opnorm{\Exp{\randmat{C}^{\otimes2}}}
    \leq & \opnorm{\Exp{\left(\left(\randmat{E}^{S}\right)^{\nu_{2}}\mat{P}\right)^{\otimes2}}}
      \opnorm{\Exp{\randmat{E}_{C}^{\otimes2}}}^{\gamma} \label{eq:4} \\
    & \times \opnorm{\Exp{\left(\mat{A}_{C}^{-1}\randmat{X}^{(R)}_{C} \mat{R}\randmat{X}^{(\rho)} \mat{A}\right)^{\otimes2} }}
      \opnorm{\Exp{\left(\left(\randmat{E}^{S}\right)^{\nu_{1}}\right)^{\otimes2}}} \nonumber
  \end{align}
  We have by
  \begin{align*}
    \opnorm{\Exp{\left(\left(\randmat{E}^{S}\right)^{\nu_{2}}\mat{P}\right)^{\otimes2}}}
    & \leq \opnorm{\left(\Exp{\randmat{E}^{S}}^{\nu_{2}}\mat{P}\right)^{\otimes2}} + C\opnorm{\mat{C}^{(S)}} \numberthis \label{eq:6} \\
    & = \norm{\mat{A}\Exp{\randmat{E}^{S}}^{\nu_{2}}\mat{P}\mat{A}_{C}^{-1}}_{2}^{2} + C\varepsilon  \\
    & = \norm{\mat{A}_{C}^{-1}\mat{R}\mat{A}\Exp{\randmat{E}^{S}}^{\nu_{2}}}_{2}^{2} + C\varepsilon  \\
    & \leq \underline{C}_{p}^{2}\left(\norm{\Exp{\randmat{E}^{S}}^{\nu_{2}}}_{2}+\norm{\mat{E}^{CG}\Exp{\randmat{E}^{S}}^{\nu_{2}}}_{2}\right)^{2} + C\varepsilon  \\
    & \leq \underline{C}_{p}^{2}\left(C_{S}+C_{A}\eta(\nu_{2})\right)^{2} + C\varepsilon.
  \end{align*}
  Since by \cref{as:prolongation,as:faults}
  \begin{align*}
    \opnorm{\Exp{\left(\mat{A}_{C}^{-1}\randmat{X}_{C}^{(R)}\mat{R}\randmat{X}^{(\rho)}\mat{A}\right)^{\otimes2}}}
    & = \norm{\Exp{\left(\randmat{X}_{C}^{(R)}\mat{R}\randmat{X}^{(\rho)}\right)^{\otimes2}}}_{2}  \numberthis \label{eq:22}\\
    & \leq \norm{\Exp{\left(\randmat{X}_{C}^{(R)}\right)^{\otimes2}}}_{2}\norm{\mat{R}}_{2}^{2}\norm{\Exp{\left(\randmat{X}^{(\rho)}\right)^{\otimes2}}}_{2} \\
    & \leq \left(\norm{\Var{\randmat{X}_{C}^{(R)}}}_{2} + \norm{\Exp{\randmat{X}_{C}^{(R)}}}_{2}^{2}\right)\norm{\mat{R}}_{2}^{2} \\
    & \qquad \times\left(\norm{\Var{\randmat{X}^{(\rho)}}}_{2} + \norm{\Exp{\randmat{X}^{(\rho)}}}_{2}^{2}\right) \\
    & \leq \left(\varepsilon+\left(e^{(R)}\right)^{2}\right)\left(\varepsilon+\left(e^{(\rho)}\right)^{2}\right)\norm{\mat{R}}_{2}^{2} \\
    & \leq \overline{C}_{p}^{2} + C\varepsilon.
  \end{align*}
  and
  \begin{align}
    \opnorm{\Exp{\left(\left(\randmat{E}^{S}\right)^{\nu_{1}}\right)^{\otimes2}}}
    & = \opnorm{\left(\Exp{\randmat{E}^{S}}^{\otimes2}+\mat{C}^{(S)}\right)^{\nu_{1}}} \label{eq:3} \\
    &\leq \opnorm{\Exp{\randmat{E}^{S}}^{\nu_{1}}}^{2} + C\varepsilon
     \leq C_{S}^{2} + C\varepsilon, \nonumber
  \end{align}
  Inserting \cref{eq:6,eq:22,eq:3} into \cref{eq:4}, we have
  \begin{align}
    \opnorm{\Exp{\randmat{C}^{\otimes2}}} & \leq C_{*}^{2}\opnorm{\Exp{\randmat{E}_{C}^{\otimes2}}}^{\gamma} + C\varepsilon. \label{eq:16}
  \end{align}
  Now we turn to the third term of \cref{eq:7}.
  We split
  \begin{align*}
    \randmat{E}^{TG}
    & = \left(\randmat{E}^{S,\text{post}}\right)^{\nu_{2}} \left(\mat{I}-\mat{P}\mat{A}_{C}^{-1}\randmat{X}^{(R)}_{C} \mat{R}\randmat{X}^{(\rho)} \mat{A}\right) \left(\randmat{E}^{S,\text{pre}}\right)^{\nu_{1}}\\
    & = \Exp{\randmat{E}^{S}}^{\nu_{2}} \mat{E}^{CG} \Exp{\randmat{E}^{S}}^{\nu_{1}} \\
    &\quad + \left(\left(\randmat{E}^{S,\text{post}}\right)^{\nu_{2}}-\Exp{\randmat{E}^{S}}^{\nu_{2}}\right) \mat{E}^{CG} \Exp{\randmat{E}^{S}}^{\nu_{1}}\\
    &\quad + \Exp{\randmat{E}^{S}}^{\nu_{2}} \mat{E}^{CG} \left(\left(\randmat{E}^{S,\text{pre}}\right)^{\nu_{1}}-\Exp{\randmat{E}^{S}}^{\nu_{1}}\right)\\
    &\quad + \left(\left(\randmat{E}^{S,\text{post}}\right)^{\nu_{2}}-\Exp{\randmat{E}^{S}}^{\nu_{2}}\right) \mat{E}^{CG} \left(\left(\randmat{E}^{S,\text{pre}}\right)^{\nu_{1}}-\Exp{\randmat{E}^{S}}^{\nu_{1}}\right)\\
    &\quad + \left(\randmat{E}^{S,\text{post}}\right)^{\nu_{2}} \mat{P}\left(\mat{A}_{C}^{-1}\mat{R}\mat{A}-\mat{A}_{C}^{-1}\randmat{X}^{(R)}_{C} \mat{R}\randmat{X}^{(\rho)} \mat{A}\right) \left(\randmat{E}^{S,\text{pre}}\right)^{\nu_{1}}
  \end{align*}
  so that
  \begin{align*}
    & \opnorm{\Exp{\randmat{E}^{TG} \otimes \randmat{C}}} \label{eq:2} \numberthis \\
    \leq & \opnorm{\Exp{\randmat{E}^{S}}^{\nu_{2}}\mat{E}^{CG}\Exp{\randmat{E}^{S}}^{\nu_{1}}}\opnorm{\Exp{\randmat{C}}}  \\
    & \quad + \opnorm{\mat{E}^{CG}} \opnorm{\mat{P}} \opnorm{\Exp{\randmat{E}_{C}}}^{\gamma} \opnorm{\Exp{\mat{A}_{C}^{-1}\randmat{X}^{(R)}_{C} \mat{R}\randmat{X}^{(\rho)} \mat{A}}} \\
    & \qquad \left\{
      \opnorm{\Exp{\left(\randmat{E}^{S}\right)^{\nu_{2}}\otimes\left(\left(\randmat{E}^{S}\right)^{\nu_{2}}-\Exp{\randmat{E}^{S}}^{\nu_{2}}\right)}}
      \opnorm{\Exp{\randmat{E}^{S}}^{\nu_{1}}}^{2}
      \right. \\
    & \qquad+
      \opnorm{\Exp{\randmat{E}^{S}}^{\nu_{2}}}^{2}
      \opnorm{\Exp{\left(\randmat{E}^{S}\right)^{\nu_{1}}\otimes\left(\left(\randmat{E}^{S}\right)^{\nu_{1}}-\Exp{\randmat{E}^{S}}^{\nu_{1}}\right)}} \\
    & \qquad+
      \opnorm{\Exp{\left(\randmat{E}^{S}\right)^{\nu_{2}}\otimes\left(\left(\randmat{E}^{S}\right)^{\nu_{2}}-\Exp{\randmat{E}^{S}}^{\nu_{2}}\right)}} \\
    & \qquad\left. \times
      \opnorm{\Exp{\left(\randmat{E}^{S}\right)^{\nu_{1}}\otimes\left(\left(\randmat{E}^{S}\right)^{\nu_{1}}-\Exp{\randmat{E}^{S}}^{\nu_{1}}\right)}}
      \right\} \\
    & \quad +
      \opnorm{\Exp{\left(\mat{A}_{C}^{-1}\mat{R}\mat{A}\right)\otimes\left(\mat{A}_{C}^{-1}\randmat{X}^{(R)}_{C} \mat{R}\randmat{X}^{(\rho)} \mat{A}\right) - \left(\mat{A}_{C}^{-1}\randmat{X}^{(R)}_{C} \mat{R}\randmat{X}^{(\rho)} \mat{A}\right)^{\otimes2}}} \\
    & \qquad \times \opnorm{\Exp{\randmat{E}_{C}}}^{\gamma} \opnorm{\Exp{\left(\left(\randmat{E}^{S}\right)^{\nu}\mat{P}\right)^{\otimes2}}}
      \opnorm{\Exp{\left(\left(\randmat{E}^{S}\right)^{\nu_{1}}\right)^{\otimes2}}}
  \end{align*}

  Now, we have by \cref{as:approximation,as:prolongation} that
  \begin{align}
    \opnorm{\mat{E}^{CG}} &=\norm{\mat{E}^{CG}}_{2} \leq C_{A}, \label{eq:10}\\
    \opnorm{\mat{P}}
    & =\norm{\mat{A}\mat{P}\mat{A}_{C}^{-1}}_{2} \leq \underline{C}_{p},   \label{eq:11}
  \end{align}
  and by \Cref{as:faults,as:prolongation} that
  \begin{align*}
    & \opnorm{\Exp{\left(\mat{A}_{C}^{-1}\mat{R}\mat{A}\right)\otimes\left(\mat{A}_{C}^{-1}\randmat{X}^{(R)}_{C} \mat{R}\randmat{X}^{(\rho)} \mat{A}\right) - \left(\mat{A}_{C}^{-1}\randmat{X}^{(R)}_{C} \mat{R}\randmat{X}^{(\rho)} \mat{A}\right)^{\otimes2}}} \label{eq:13} \numberthis \\
    = & \norm{\Exp{\mat{R}\otimes \left(\randmat{X}_{C}^{(R)}\mat{R}\randmat{X}^{(\rho)}\right)-\left(\randmat{X}_{C}^{(R)}\mat{R}\randmat{X}^{(\rho)}\right)^{\otimes2}}}_{2}\\
    \leq & \norm{\mat{R}}_{2}^{2}
           \left\{
           \abs{e_{C}^{(R)}e^{(\rho)}}\abs{1-e_{C}^{(R)}e^{(\rho)}} + \left(e^{(\rho)}\right)^{2}\norm{\Var{\randmat{X}_{C}^{(R)}}}_{2} \right. \\
    & + \left.\left(e_{C}^{(R)}\right)^{2}\norm{\Var{\randmat{X}^{(\rho)}}}_{2} +  \norm{\Var{\randmat{X}_{C}^{(R)}}}_{2} \norm{\Var{\randmat{X}^{(\rho)}}}_{2}
           \right\} \\
    \leq & C\varepsilon
  \end{align*}
  and that
  \begin{align*}
    &\opnorm{\Exp{\left(\randmat{E}^{S}\right)^{\nu}\otimes\left(\left(\randmat{E}^{S}\right)^{\nu}-\Exp{\randmat{E}^{S}}^{\nu}\right)}} \label{eq:32} \numberthis \\
    = & \opnorm{\Exp{\left(\randmat{E}^{S}\right)^{\otimes2}}^{\nu} - \left(\Exp{\randmat{E}^{S}}^{\otimes2}\right)^{\nu}}  \\
  = &\opnorm{\left(\Exp{\randmat{E}^{S}}^{\otimes2} + \mat{C}^{(S)}\right)^{\nu} - \left(\Exp{\randmat{E}^{S}}^{\otimes2}\right)^{\nu}} \\
  \leq & C\opnorm{\mat{C}^{(S)}} \leq C\varepsilon.
  \end{align*}
  Also,
  \begin{align*}
    & \opnorm{\Exp{\randmat{E}^{S}}^{\nu_{2}}\mat{E}^{CG}\Exp{\randmat{E}^{S}}^{\nu_{1}}} \label{eq:15} \numberthis \\
    \leq & \opnorm{\Exp{\randmat{E}^{TG}\left(\nu_{1},\nu_{2}\right)}}
      + \abs{1-e_{C}^{(R)}}\abs{1-e^{(\rho)}} \norm{\Exp{\randmat{E}^{S}}^{\nu_{1}}(\mat{I}-\mat{E}^{CG})\Exp{\randmat{E}^{S}}^{\nu_{2}}}_{2} \\
     = &\opnorm{\Exp{\randmat{E}^{TG}\left(\nu_{1},\nu_{2}\right)}} + C\varepsilon.
  \end{align*}
  Hence we find by inserting \cref{eq:10,eq:11,eq:13,eq:15,eq:8,eq:32} into \cref{eq:2}
  \begin{align}
    \opnorm{\Exp{\randmat{E}^{TG} \otimes \randmat{C}}}
    & \leq C_{*}\opnorm{\Exp{\randmat{E}^{TG}}}\opnorm{\Exp{\randmat{E}_{C}}}^{\gamma} + C\varepsilon. \label{eq:9}
  \end{align}
  Therefore, the second recursive inequality
  \begin{align*}
    \opnorm{\Exp{\randmat{E}^{\otimes2}}}
    & \leq \opnorm{\Exp{\left(\randmat{E}^{TG}\right)^{\otimes2}}} + C\varepsilon \\
    & \quad +2C_{*}\opnorm{\Exp{\randmat{E}^{TG}}}\opnorm{\Exp{\randmat{E}_{C}}}^{\gamma} \\
    & \quad + \left(C_{*}\right)^{2} \opnorm{\Exp{\randmat{E}_{C}^{\otimes2}}}^{\gamma}
  \end{align*}
  follows from \cref{eq:9,eq:16,eq:7}.
\end{proof}

We adapt \Cref{lem:reccursion} to the tensor space setting:
\begin{lemma}\label{lem:reccursion2}
  Let the elements of the sequences \(\left\{\eta_{\ell}\right\}_{\ell\geq1}\) and \(\left\{\eta_{\ell,\otimes2}\right\}_{\ell\geq1}\) be bounded as follows
  \begin{align*}
    \eta_{1} & \leq \xi, &
    \eta_{\ell} & \leq \xi + C_{*}\eta_{\ell-1}^{\gamma}, \quad \ell\geq 2, \\
    \eta_{1,\otimes2}^{2} & \leq \xi_{\otimes2}^{2}, &
    \eta_{\ell,\otimes2}^{2} & \leq \xi_{\otimes2}^{2} + 2\xi C_{*}\eta_{\ell-1}^{\gamma} + C_{*}^{2}\eta_{\ell-1,\otimes2}^{2\gamma}, \quad \ell\geq 2.
  \end{align*}
  If \(\gamma\geq 2\), \(C_{*}\gamma > 1\) and \(\max\left\{\xi,\xi_{\otimes2}\right\} \leq \frac{\gamma-1}{\gamma}\frac{1}{\left(C_{*}\gamma\right)^{\frac{1}{\gamma-1}}}\), then
  \begin{align*}
    \max\left\{\eta_{\ell}, \eta_{\ell,\otimes2}\right\} & \leq\frac{\gamma}{\gamma-1}\max\left\{\xi,\xi_{\otimes2}\right\}<1, & \ell\geq 1.
    \intertext{In the case of \(\gamma=2\), we have}
    \max\left\{\eta_{\ell}, \eta_{\ell,\otimes2}\right\} & \leq \frac{2\max\left\{\xi,\xi_{\otimes2}\right\}}{1+\sqrt{1-4C_{*}\max\left\{\xi,\xi_{\otimes2}\right\}}}<1, & \ell\geq 1.
  \end{align*}
\end{lemma}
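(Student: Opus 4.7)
The plan is to prove both bounds simultaneously by induction on $\ell$, using the common upper bound $q$ suggested by the classical Lemma~\ref{lem:reccursion}. Setting $\mu:=\max\{\xi,\xi_{\otimes 2}\}$, the desired statement reads $\max\{\eta_\ell,\eta_{\ell,\otimes 2}\}\leq q$, where $q=\tfrac{\gamma}{\gamma-1}\mu$ for general $\gamma\geq 2$ and $q=\tfrac{2\mu}{1+\sqrt{1-4C_{*}\mu}}$ in the refined $\gamma=2$ case. For the scalar sequence $\{\eta_\ell\}$, the bound $\eta_\ell\leq q$ follows immediately from Lemma~\ref{lem:reccursion} applied with $\xi$ replaced by $\mu$ (which only enlarges the right-hand side). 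Hence the new content sits entirely in controlling $\{\eta_{\ell,\otimes 2}\}$.

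The decisive observation is that, after inserting $\xi_{\otimes 2}\leq\mu$, $\xi\leq\mu$, and the inductive hypothesis $\max\{\eta_{\ell-1},\eta_{\ell-1,\otimes 2}\}\leq q$, the right-hand side of the tensor recursion becomes a perfect square:
\begin{align*}
\eta_{\ell,\otimes 2}^{2}
\;\leq\;\xi_{\otimes 2}^{2}+2\xi C_{*}\eta_{\ell-1}^{\gamma}+C_{*}^{2}\eta_{\ell-1,\otimes 2}^{2\gamma}
\;\leq\;\mu^{2}+2\mu C_{*}q^{\gamma}+C_{*}^{2}q^{2\gamma}
\;=\;\bigl(\mu+C_{*}q^{\gamma}\bigr)^{2}.
\end{align*}
Taking square roots reduces the inductive step to the scalar inequality $\mu+C_{*}q^{\gamma}\leq q$. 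This is exactly the fixed-point condition already employed in the proof of Lemma~\ref{lem:reccursion}: for general $\gamma\geq 2$ it is satisfied at $q=\tfrac{\gamma}{\gamma-1}\mu$ under the hypothesis $\mu\leq\tfrac{\gamma-1}{\gamma}(C_{*}\gamma)^{-1/(\gamma-1)}$, and in the sharper $\gamma=2$ variant it holds with equality by construction of the smaller root of $C_{*}q^{2}-q+\mu=0$. The base case $\ell=1$ is immediate from $\eta_{1}\leq\xi\leq\mu\leq q$ and $\eta_{1,\otimes 2}\leq\xi_{\otimes 2}\leq\mu\leq q$, and the strict inequality $q<1$ in both regimes is inherited from Lemma~\ref{lem:reccursion}.

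The only step that requires attention is the cross-term $2\xi C_{*}\eta_{\ell-1}^{\gamma}$, whose factors are drawn from the \emph{scalar} sequence rather than the tensor one; this is what could a priori prevent the right-hand side from factoring. Running the induction jointly on both sequences with a single common bound $q$, rather than tracking $\eta_{\ell}$ and $\eta_{\ell,\otimes 2}$ separately, resolves this obstacle automatically and makes the quadratic collapse to $(\mu+C_{*}q^{\gamma})^{2}$. With that observation in place, the argument is essentially a transcription of the classical recursion proof, and no hypothesis on $\xi_{\otimes 2}$ is needed beyond the common bound $\xi_{\otimes 2}\leq\tfrac{\gamma-1}{\gamma}(C_{*}\gamma)^{-1/(\gamma-1)}$ already assumed.
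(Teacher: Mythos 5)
Your proof is correct and follows essentially the same route as the paper's: both hinge on recognizing that, once a common bound on both sequences at level $\ell-1$ is inserted, the right-hand side of the tensor recursion collapses to the perfect square $(\mu + C_{*}q^{\gamma})^{2}$, reducing everything to the scalar fixed-point condition underlying Lemma~\ref{lem:reccursion}. The paper packages this slightly more compactly by defining $\alpha_{\ell}:=\max\{\eta_{\ell},\eta_{\ell,\otimes 2}\}$, deriving the one-line recursion $\alpha_{\ell}\leq\beta+C_{*}\alpha_{\ell-1}^{\gamma}$ (taking square roots via the same factorization), and invoking Lemma~\ref{lem:reccursion} as a black box, whereas you unpack the fixed-point inequality and run the induction explicitly.
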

\begin{proof}
  Set \(\alpha_{\ell}:=\max\left\{\eta_{\ell},\eta_{\ell,\otimes2}\right\}\) and \(\beta:=\max\left\{\xi,\xi_{\otimes2}\right\}\).
  Then
  \begin{align*}
    \eta_{1}              & \leq \beta, &
    \eta_{\ell}              & \leq \beta + C_{*}\alpha_{\ell-1}^{\gamma}, \quad\ell\geq 2, \\
    \eta_{1,\otimes2}^{2} & \leq \beta^{2}, &
    \eta_{\ell,\otimes2}^{2} & \leq \beta^{2} + 2\beta C_{*}\alpha_{\ell-1}^{\gamma} + C_{*}^{2}\alpha_{\ell-1}^{2\gamma}, \quad \ell\geq 2,
  \end{align*}
  and hence
  \begin{align*}
    \alpha_{1} & \leq\beta, &
    \alpha_{\ell} & \leq \beta + C_{*}\alpha_{\ell-1}^{\gamma}, \quad \ell\geq2.
  \end{align*}
  Application of \cref{lem:reccursion} gives for \(\gamma\geq 2\)
  \begin{align*}
    \alpha_{\ell} & \leq \frac{\gamma}{\gamma-1}\beta<1 \\
    \intertext{and for \(\gamma=2\)}
    \alpha_{\ell} & \leq \frac{2\beta}{1+\sqrt{1-4C_{*}\beta}}<1.
  \end{align*}
\end{proof}

\MGNoProlong*
\begin{proof}
  Set
  \begin{align*}
    \eta_{\ell} & :=\opnorm{\Exp{\randmat{E}_{\ell}\left(\nu_{1},\nu_{2},\gamma\right)}}, \\
    \eta_{\ell,\otimes2} & :=\opnorm{\Exp{\randmat{E}_{\ell}\left(\nu_{1},\nu_{2},\gamma\right)^{\otimes2}}}^{\frac{1}{2}}, \\
    \xi & :=\max_{\ell}\norm{\mat{E}_{\ell}^{TG}\left(\nu_{2},\nu_{1}\right)}_{2} + C\varepsilon, \\
    \xi_{\otimes2} & :=\max_{\ell}\norm{\mat{E}_{\ell}^{TG}\left(\nu_{2},\nu_{1}\right)}_{2} + C\varepsilon.
  \end{align*}
  It follows from
  \begin{align*}
    \Exp{\randmat{E}^{S}_{\ell}} &= \mat{E}^{S}_{\ell} + \left(1-e_{\ell}^{(S)}\right)\left(\mat{I}-\mat{E}^{S}_{\ell}\right), \\
    \Exp{\randmat{E}^{CG}_{\ell}} &= \mat{E}^{CG}_{\ell} + \left(1-e_{\ell-1}^{(R)}e_{\ell}^{(\rho)}\right)\left(\mat{I}-\mat{E}^{CG}_{\ell}\right),
  \end{align*}
  and \Cref{as:faults,as:smoother,as:approximation} that
  \begin{align*}
    \opnorm{\Exp{\randmat{E}_{\ell}^{TG}\left(\nu_{1},\nu_{2}\right)}} & \leq \norm{\mat{E}_{\ell}^{TG}\left(\nu_{2},\nu_{1}\right)}_{2} + C\varepsilon,
  \end{align*}
  and therefore with \cref{eq:17} that \(\eta_{\ell}\leq \xi + C_{*}\eta_{\ell-1}^{\gamma}\).
  By \Cref{thm:TGNoProlong} and \cref{eq:1}, we find \(\eta_{\ell,\otimes2}^{2}\leq \xi_{\otimes2}^{2} + 2\xi C_{*}\eta_{\ell-1}^{\gamma}+C_{*}^{2}\eta_{\ell-1,\otimes2}^{2}\).
  Since \(C_{*}\), \(\xi\) and \(\xi_{\otimes2}\) decrease as the number of smoothing steps increases, \Cref{lem:reccursion2} can be applied for a sufficient number of smoothing steps.
\end{proof}

\section*{Acknowledgments}
This research used resources of the Oak Ridge Leadership Computing Facility at the Oak Ridge National Laboratory, which is supported by the Office of Science of the U.S. Department of Energy under Contract No. DE-AC05-00OR22725.

\bibliography{/home/cag/org/papers}

\begin{thebibliography}{10}

\bibitem{AinsworthGlusa216_IsMultigridMethodFaultTolerant}
{\sc M.~Ainsworth and C.~Glusa}, {\em {Is the multigrid method fault tolerant?
  The Two Grid Case.}}, {SIAM Journal on Scientific Computing},  (Submitted).

\bibitem{AvizienisLaprieEtAl2004_BasicConceptsTaxonomyDependableSecureComputing}
{\sc A.~Avi{\v{z}}ienis, J.-C. Laprie, B.~Randell, and C.~Landwehr}, {\em Basic
  concepts and taxonomy of dependable and secure computing}, IEEE Transactions
  on Dependable and Secure Computing, 1 (2004), pp.~11--33.

\bibitem{BankShermanEtAl1983_RefinementAlgorithmsDataStructures}
{\sc R.~E. Bank, A.~H. Sherman, and A.~Weiser}, {\em Some refinement algorithms
  and data structures for regular local mesh refinement}, Scientific Computing,
  Applications of Mathematics and Computing to the Physical Sciences, 1 (1983),
  pp.~3--17.

\bibitem{BougerolLacroix1985_ProductsRandomMatricesWith}
{\sc P.~Bougerol and J.~Lacroix}, {\em Products of random matrices with
  applications to {S}chr{\"o}dinger operators}, vol.~8 of Progress in
  Probability and Statistics, Birkh{\"a}user Boston Inc., Boston, MA, 1985.

\bibitem{Braess2007_FiniteElements}
{\sc D.~Braess}, {\em {Finite elements. Theory, fast solvers and applications
  in solid mechanics. Translated from German by Larry L. Schumaker. 3rd ed.}},
  {Cambridge: Cambridge University Press}, 2007,
  \url{http://dx.doi.org/10.1017/CBO9780511618635}.

\bibitem{CappelloGeistEtAl2009_TowardExascaleResilience}
{\sc F.~Cappello, A.~Geist, B.~Gropp, L.~Kale, B.~Kramer, and M.~Snir}, {\em
  Toward exascale resilience}, International Journal of High Performance
  Computing Applications,  (2009).

\bibitem{CappelloGeistEtAl2014_TowardExascaleResilience}
{\sc F.~Cappello, A.~Geist, W.~Gropp, S.~Kale, B.~Kramer, and M.~Snir}, {\em
  Toward exascale resilience: 2014 update}, Supercomputing frontiers and
  innovations, 1 (2014), pp.~5--28.

\bibitem{CrisantiPaladinEtAl1993_ProductsRandomMatrices}
{\sc A.~Crisanti, G.~Paladin, and A.~Vulpiani}, {\em Products of random
  matrices}, Springer, 1993.

\bibitem{Doerfler1996_ConvergentAdaptiveAlgorithmPoissonsEquation}
{\sc W.~D{\"o}rfler}, {\em A convergent adaptive algorithm for {P}oisson's
  equation}, SIAM Journal on Numerical Analysis, 33 (1996), pp.~1106--1124.

\bibitem{ErnGuermond2004_TheoryPracticeFiniteElements}
{\sc A.~Ern and J.-L. Guermond}, {\em {Theory and Practice of Finite
  Elements.}}, {Applied Mathematical Sciences 159. New York, NY: Springer},
  2004.

\bibitem{Hackbusch1985_MultiGridMethodsApplications}
{\sc W.~Hackbusch}, {\em Multi-grid methods and applications}, vol.~4,
  Springer-Verlag Berlin, 1985.

\bibitem{Hackbusch1994_IterativeSolutionLargeSparseSystemsEquations}
{\sc W.~Hackbusch}, {\em Iterative solution of large sparse systems of
  equations}, vol.~95 of Applied Mathematical Sciences, Springer-Verlag, New
  York, 1994, \url{http://dx.doi.org/10.1007/978-1-4612-4288-8}.

\bibitem{HeraultRobert2015_FaultToleranceTechniquesHighPerformanceComputing}
{\sc T.~Herault and Y.~Robert}, {\em {Fault-Tolerance Techniques for
  High-Performance Computing}}, Springer, 2015.

\bibitem{KarypisKumar1998_FastHighQualityMultilevel}
{\sc G.~Karypis and V.~Kumar}, {\em A fast and high quality multilevel scheme
  for partitioning irregular graphs}, SIAM Journal on Scientific Computing, 20
  (1998), pp.~359--392, \url{http://dx.doi.org/10.1137/S1064827595287997}.

\bibitem{KroegerPreusser2008_Stability8TetrahedraShortest}
{\sc T.~Kr{\"o}ger and T.~Preusser}, {\em Stability of the 8-tetrahedra
  shortest-interior-edge partitioning method}, Numerische Mathematik, 109
  (2008), pp.~435--457.

\bibitem{SnirWisniewskiEtAl2014_AddressingFailuresExascaleComputing}
{\sc M.~Snir, R.~W. Wisniewski, J.~A. Abraham, S.~V. Adve, S.~Bagchi,
  P.~Balaji, J.~Belak, P.~Bose, F.~Cappello, B.~Carlson, et~al.}, {\em
  Addressing failures in exascale computing}, International Journal of High
  Performance Computing Applications, 28 (2014), pp.~129--173.

\bibitem{TrottenbergOosterleeEtAl2001_Multigrid}
{\sc U.~Trottenberg, C.~W. Oosterlee, and A.~Sch{\"u}ller}, {\em Multigrid},
  Academic Press Inc., San Diego, CA, 2001.
\newblock With contributions by A. Brandt, P. Oswald and K. St{\"u}ben.

\end{thebibliography}
\bibliographystyle{siamplain}

\end{document}